\documentclass[twoside,a4paper,reqno,11pt]{amsart} 
\usepackage[top=28mm,right=28mm,bottom=28mm,left=28mm]{geometry}
\headheight=14pt
\parskip 1mm

\usepackage{amsmath,amsthm,amsfonts,amssymb,latexsym,enumerate,hyperref, bm, stmaryrd} 




\newcommand{\bC}{{\mathbf C}}
\newcommand{\bZ}{{\mathbf Z}}







\newcommand{\PPr}{\operatorname{Pr}}

\newcommand{\PSL}{\operatorname{PSL}}
\newcommand{\SL}{\operatorname{SL}}

\newcommand{\bO}{{\mathbf O}}
\newcommand{\bF}{{\mathbf F}}
\newcommand{\leqs}{\leqslant}
\newcommand{\geqs}{\geqslant}

\makeatletter
\newcommand{\imod}[1]{\allowbreak\mkern4mu({\operator@font mod}\,\,#1)}
\makeatother

\newtheorem{thm}{Theorem}[section]
\newtheorem{lem}[thm]{Lemma}

\newtheorem{prop}[thm]{Proposition}
\newtheorem{cor}[thm]{Corollary}

\newtheorem*{thmA}{Theorem A}
\newtheorem*{thmB}{Theorem B}

\newtheorem*{thmC}{Theorem C}
\newtheorem*{thmD}{Theorem D}

\theoremstyle{definition}
\newtheorem{rem}[thm]{Remark}

\newtheorem*{deff}{Definition}
\newtheorem{remk}{Remark}
\newtheorem{ex}[thm]{Example}




\def\cent#1#2{{\bf C}_{#1}(#2)}

\begin{document}

\title[On the commuting probability of $p$-elements in a finite group]{On the commuting probability of $p$-elements \\ in a finite group} 

\author[T.C. Burness]{Timothy C. Burness}
\address{T.C. Burness, School of Mathematics, University of Bristol, Bristol BS8 1UG, UK}
\email{t.burness@bristol.ac.uk}

\author[R.M. Guralnick]{Robert M. Guralnick}
\address{R.M. Guralnick, Department of Mathematics, University of Southern California, Los Angeles, CA 90089-2532, USA}
\email{guralnic@usc.edu}

\author[A. Moret\'o]{Alexander Moret\'o}
\address{A. Moret\'o, Departament de Matem\`atiques, Universitat de Val\`encia, 46100
  Burjassot, Val\`encia, Spain}
\email{alexander.moreto@uv.es}

\author[G. Navarro]{Gabriel Navarro}
\address{G. Navarro, Departament de Matem\`atiques, Universitat de Val\`encia, 46100
 Burjassot, Val\`encia, Spain}
\email{gabriel@uv.es}
  
\thanks{Burness thanks the Department of Mathematics at the University of Padua for their generous hospitality during a research visit in autumn 2021. Guralnick was partially supported by the NSF grant DMS-1901595 and a Simons Foundation Fellowship 609771. Moret\'o and Navarro are supported by Ministerio de Ciencia
  e Innovaci\'on (Grant PID2019-103854GB-I00 funded by 
  MCIN/AEI/10.13039/501100011033). Moret\'o is also supported by Generalitat Valenciana 
  AICO/2020/298. We thank J. Mart\'{\i}nez and P.H. Tiep for useful conversations on this paper.  
  We thank the referee for their helpful comments.}

\keywords{Finite groups, commuting probability, $p$-elements}

\makeatletter
\@namedef{subjclassname@2020}{\textup{2020} Mathematics Subject Classification}
\makeatother

\subjclass[2020]{Primary 20D20}

\date{\today}

\begin{abstract}
Let $G$ be a finite group, let $p$ be a prime and let ${\rm Pr}_p(G)$ be the probability that two random $p$-elements of $G$ commute. In this paper we prove that ${\rm Pr}_p(G) > (p^2+p-1)/p^3$ if and only if $G$ has a normal and abelian Sylow $p$-subgroup, which generalizes previous results on the widely studied commuting probability of a finite group. This bound is best possible in the sense that for each prime $p$ there are groups with ${\rm Pr}_p(G) = (p^2+p-1)/p^3$ and we classify all such groups. Our proof is based on bounding the proportion of $p$-elements in $G$ that commute with a fixed $p$-element in $G \setminus \textbf{O}_p(G)$, which in turn relies on recent work of the first two authors on fixed point ratios for finite primitive permutation groups. 
\end{abstract}

\maketitle

\section{Introduction}\label{s:intro}

The \emph{commuting probability} of a finite group $G$ is the probability that two random elements of $G$ commute, namely
\[
{\rm Pr}(G)= \frac{|\{(x, y) \in G\times G \, : \, xy=yx\}|}{|G|^2}.
\] 
A celebrated, but elementary, result of Gustafson \cite{gus} asserts that ${\rm Pr}(G)>5/8$ if and only if $G$ is abelian, which is best possible since ${\rm Pr}(D_8) = 5/8$. This concept has been widely studied in recent years and some natural analogues for infinite groups have also been investigated (see, for instance, \cite{amv,  ebe,  gr,  les, neu, toi}). In addition, the commuting variety of elements in Lie algebras and algebraic groups has been a subject of great interest for several decades. This was originally introduced by Motzkin and Taussky \cite{MT} and further studied by Richardson \cite{Ri}, Ginzburg \cite{Gin}, Premet \cite{Pr} and others.

In this paper, we pursue a local version of Gustafson's theorem, which turns out
to be significantly more challenging.

\begin{deff}
Let $G$ be a finite group, let $p$ be a prime and let $G_p$ be the
set of $p$-elements in $G$ (that is, the set of elements in $G$ of order $p^m$ for some $m \geqs 0$). Then
\[
{\rm Pr}_p(G) = \frac{|\{(x, y) \in G_p \times G_p \,:\, xy=yx\}|}{|G_p|^2}
\]
is the probability that two random $p$-elements of $G$ commute. Note that ${\rm Pr}_p(G) = 1$ if and only if $G$ has a normal and abelian Sylow $p$-subgroup.
\end{deff}

Local versions of the commuting probability have also been studied in the context of algebraic groups and Lie algebras. In particular, Premet \cite{Pr} identified the irreducible components of the commuting variety of nilpotent
elements of a reductive Lie algebra defined over an algebraically closed field of good characteristic (and similarly, as an immediate consequence, for unipotent elements in the corresponding reductive algebraic groups).  The set of commuting $r$-tuples of elements of order $p$ (or commuting nilpotent elements of nilpotence degree $p$ in a $p$-restricted Lie algebra) has also been studied for its connection to problems in representation theory (see \cite{CFP}). 
For finite groups, a generating function is presented in \cite{FG} for counting the number of commuting pairs of $p$-elements in some finite classical groups in good characteristic.

In this paper we consider arbitrary finite groups. Given a prime number $p$, set
\[
f(p) = \frac{p^2+p-1}{p^3}.
\]
Our first main result is the following.

\begin{thmA}\label{thmA}
Let $G$ be a finite group and let $p$ be a prime. Then ${\rm Pr}_p(G)> f(p)$ if and only if $G$ has a normal and abelian Sylow $p$-subgroup. 
\end{thmA}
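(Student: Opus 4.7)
The reverse direction is immediate: a normal abelian Sylow $p$-subgroup $P$ forces $G_p = P$, so every pair of $p$-elements commutes and ${\rm Pr}_p(G) = 1 > f(p)$. For the forward direction assume that $G$ has no normal abelian Sylow $p$-subgroup, set $N = {\bf O}_p(G)$ and fix $P \in {\rm Syl}_p(G)$. The plan is to start from the averaging identity
\[
{\rm Pr}_p(G) = \frac{1}{|G_p|} \sum_{x \in G_p} \frac{|C_G(x) \cap G_p|}{|G_p|}
\]
and split according to whether $P$ is normal in $G$.

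If $P \trianglelefteq G$ then $P$ is non-abelian by hypothesis and $G_p = P$, so ${\rm Pr}_p(G) = {\rm Pr}(P)$. Here I would invoke the classical class-equation bound for non-abelian $p$-groups: using $[P:C_P(x)] \geqslant p$ for every $x \in P \setminus Z(P)$ together with $[P:Z(P)] \geqslant p^2$,
\[
{\rm Pr}(P) \leqslant \frac{|Z(P)|}{|P|} + \frac{1}{p}\left(1 - \frac{|Z(P)|}{|P|}\right) \leqslant \frac{1}{p} + \frac{1}{p^2}\left(1 - \frac{1}{p}\right) = f(p),
\]
with equality forcing $[P:Z(P)] = p^2$.

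If $P$ is not normal then $N$ is a proper subgroup of $P$, and the normality of $N$ inside $P$ gives $|N| \leqslant |P|/p$ and hence $|N|/|G_p| \leqslant 1/p$. The substantive step is the matching centralizer bound: for every $x \in G_p \setminus N$,
\[
\frac{|C_G(x) \cap G_p|}{|G_p|} \leqslant \frac{1}{p^2}.
\]
Granted this, estimating trivially on $N$ and by $1/p^2$ off $N$,
\[
{\rm Pr}_p(G) \leqslant \frac{|N|}{|G_p|}\left(1 - \frac{1}{p^2}\right) + \frac{1}{p^2} \leqslant \frac{1}{p}\left(1 - \frac{1}{p^2}\right) + \frac{1}{p^2} = f(p),
\]
and both inequalities being tight exactly tells us which groups sit on the boundary. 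To prove the centralizer bound I would first quotient out ${\bf O}_{p'}(G)$, then use the generalized Fitting subgroup structure to reduce to an almost simple group $L$ acting on itself by conjugation; the ratio $|C_L(x) \cap L_p|/|L_p|$ decomposes as a weighted sum over $L$-conjugacy classes $y^L$ of $p$-elements of fixed point ratios $|y^L \cap C_L(x)|/|y^L|$, precisely the quantities controlled by the recent Burness--Guralnick estimates for primitive actions of almost simple groups.

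The main obstacle is the centralizer bound in the non-normal case. The reduction to almost simple is delicate when $x$ permutes the components of $G$ non-trivially, since this introduces twisted diagonal classes whose $p$-element contributions must be tracked without loss of centralizer information. Within the almost simple setting one then has to bound the sum of fixed point ratios over \emph{all} conjugacy classes of $p$-elements uniformly, handling unipotent and semisimple classes together across every Lie type, the sporadic groups and the alternating groups, and in both defining and cross characteristic. Because $f(p)$ is sharp the Burness--Guralnick fixed point ratio bounds have to be applied in essentially tight form, with no slack in the worst cases.
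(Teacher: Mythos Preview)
Your overall shape—split into the $p$-group case and the non-normal-Sylow case, average over $G_p$, and feed in a pointwise centralizer bound—is exactly the paper's strategy, and your treatment of the $p$-group case is fine. The problem is the centralizer bound you claim in the non-normal case.

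The inequality $|C_G(x)_p|/|G_p| \leqs 1/p^2$ for $x \in G_p \setminus \bO_p(G)$ is \emph{false}. Take $G = {\rm PSL}_2(p)$ with $p \geqs 5$: here $\bO_p(G)=1$, there are $p+1$ Sylow $p$-subgroups of order $p$, so $|G_p| = 1 + (p+1)(p-1) = p^2$, while for any nontrivial $p$-element $x$ one has $C_G(x)_p$ equal to the unique Sylow $p$-subgroup through $x$, of order $p$. Thus the ratio is exactly $1/p$, not $1/p^2$. (This is precisely the equality case in Theorem~B.) With only $1/p$ available, your averaging gives
\[
{\rm Pr}_p(G) \leqs \frac{|N|}{|G_p|}\Bigl(1-\tfrac{1}{p}\Bigr) + \tfrac{1}{p},
\]
and the bound $|N|/|G_p| \leqs 1/p$ is not enough; you would need $|N|/|G_p| \leqs 1/p^2$.

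The paper fixes this by first passing to $G/\bO_p(G)$ using Lemma~\ref{quop} (which is legitimate because $N$ is a $p$-group; note that quotienting by $\bO_{p'}(G)$, as you suggest, can \emph{increase} ${\rm Pr}_p$, so that reduction does not work). After this reduction $\bO_p(G)=1$, so $N$ contributes only the identity, and the correct bound $|C_G(x)_p|/|G_p| \leqs 1/p$ (Theorem~C) combined with $|G_p| \geqs p^2$ yields
\[
{\rm Pr}_p(G) \leqs \frac{1}{|G_p|}\Bigl(1 + \tfrac{|G_p|-1}{p}\Bigr) \leqs \frac{1}{p^2}\Bigl(1+\tfrac{p^2-1}{p}\Bigr) = f(p).
\]
Your outline of how to prove the centralizer bound (reduce via $\bF^*(G)$ to almost simple, then invoke fixed point ratio estimates) is essentially the skeleton of the proof of Theorem~C, but it will only ever deliver $1/p$, and the sharpness of $f(p)$ comes from combining that with the quotient step and $|G_p|\geqs p^2$, not from a stronger pointwise bound.
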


In particular, if $G$ is a nonabelian finite simple group and $|G|$ is divisible by $p$, then ${\rm Pr}_p(G) \leqs f(p)$. We can say more in this situation. 

\begin{thmB}\label{thmB}
Let $G$ be a nonabelian finite simple group and let $p$ be a prime divisor of $|G|$. Then ${\rm Pr}_p(G) = f(p)$ if and only if $p \geqs 5$ and $G$ is isomorphic to ${\rm PSL}_{2}(p)$.
\end{thmB}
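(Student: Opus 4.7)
The plan is to treat the two implications separately. The sufficiency direction is a direct computation; the necessity direction combines a refinement of the quantitative bound behind Theorem~A with the classification of finite simple groups.

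\textbf{Sufficiency.} Let $p \geqs 5$ and $G = \PSL_2(p)$. A Sylow $p$-subgroup $P$ of $G$ is cyclic of order $p$ and is a TI-subgroup of $G$ whose normalizer is a Borel subgroup of order $p(p-1)/2$. Hence $n_p(G) = p+1$, and since distinct Sylow $p$-subgroups of $G$ intersect trivially, $|G_p| = 1 + (p+1)(p-1) = p^2$. For any $x \in G_p \setminus \{1\}$, the TI property forces $C_{G_p}(x) = P$, where $P$ is the unique Sylow containing $x$, so $|C_{G_p}(x)| = p$. Counting ordered commuting pairs gives
\[
\sum_{x \in G_p} |C_{G_p}(x)| = |G_p| + (|G_p|-1)p = p^3 + p^2 - p,
\]
whence $\mathrm{Pr}_p(G) = (p^3 + p^2 - p)/p^4 = (p^2+p-1)/p^3 = f(p)$.

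\textbf{Necessity.} Suppose $G$ is nonabelian simple, $p \mid |G|$, and $\mathrm{Pr}_p(G) = f(p)$. Since $G$ is nonabelian simple, $\mathbf{O}_p(G) = 1$, so Theorem~A immediately gives $\mathrm{Pr}_p(G) \leqs f(p)$. To capture equality, the strategy is to refine the quantitative bound underpinning Theorem~A, which as described in the introduction estimates the proportion of $p$-elements commuting with a fixed element of $G_p \setminus \mathbf{O}_p(G)$. This bound should in fact yield $|C_{G_p}(x)|/|G_p| \leqs 1/p$ for every $x \in G_p \setminus \{1\}$, with equality only under very restrictive structural hypotheses. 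Summing this pointwise inequality gives
\[
\mathrm{Pr}_p(G) \leqs \frac{1}{|G_p|} + \frac{|G_p|-1}{p|G_p|} = \frac{p + |G_p| - 1}{p|G_p|},
\]
and the right-hand side, as a function of $|G_p|$, is strictly decreasing and equals $f(p)$ exactly when $|G_p| = p^2$. Hence $\mathrm{Pr}_p(G) = f(p)$ forces $|G_p| = p^2$ together with $|C_{G_p}(x)| = p$ for every nontrivial $p$-element $x$.

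Since Sylow $p$-subgroups of $G$ are not normal, $|G_p| > |P|$, so $|P| = p$; Sylow's theorem then forces $n_p(G) = (p^2-1)/(p-1) = p+1$. The pointwise equality $|C_{G_p}(x)| = p$ additionally requires that any two Sylow $p$-subgroups intersecting nontrivially must coincide, so $P$ is a TI-subgroup. It remains to classify the nonabelian finite simple groups with a TI Sylow $p$-subgroup of order $p$ having exactly $p+1$ conjugates. The conjugation action on $\Syl_p(G)$ embeds $G$ faithfully in $S_{p+1}$, and $|G| = p(p+1)e$ for some divisor $e$ of $p-1$; Burnside's normal $p$-complement theorem, together with simplicity, forces $e \geqs 2$. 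A quick order count rules out $p \in \{2,3\}$, and invoking the classification of finite simple groups yields $G \cong \PSL_2(p)$ for $p \geqs 5$ as the only possibility.

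The principal obstacle is the first leg of the necessity argument: extracting and sharpening the pointwise bound $|C_{G_p}(x)|/|G_p| \leqs 1/p$ (together with a characterization of the equality cases) from the Theorem~A toolkit, whose backbone is the authors' fixed-point-ratio estimates for finite primitive permutation groups. The closing CFSG step is essentially a table look-up once the $p$-local structure has been pinned down.
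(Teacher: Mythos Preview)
Your approach is correct and matches the paper's almost exactly. The pointwise bound you flag as ``the principal obstacle'' is precisely Theorem~C (stated in the introduction and proved before Theorems~A and~B), so nothing needs to be extracted or sharpened; you also do not need any characterization of its equality cases, since once $|G_p|=p^2$ is forced, the per-element equality $|\bC_G(x)_p|=p$ drops out automatically from the global equality $\PPr_p(G)=f(p)$ combined with the pointwise upper bound $|\bC_G(x)_p|\leqs |G_p|/p = p$. For the endgame the paper observes that the conjugation action on the $p+1$ Sylow $p$-subgroups is $2$-transitive with point stabilizer $N_G(P)$ of order at most $p(p-1)$, hence $|G|\leqs p(p^2-1)$, and then reads off $G\cong\PSL_2(p)$ from Cameron's list of $2$-transitive groups---essentially your CFSG lookup, phrased slightly more concretely.
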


In fact, we can classify all the finite groups $G$ with $G=\bO^{p'}(G)$ and $\Pr_p(G) = f(p)$, where $\bO^{p'}(G)$ is the subgroup of $G$ generated by $G_p$. See Theorem \ref{equality} for a  precise statement. In particular, we observe that there is no 
nonsolvable group with $\Pr_2(G)=5/8$ and no nonsolvable group $G = \bO^{3'}(G)$ with $\Pr_3(G) = 11/27$. In addition, if $G$ is given as in Theorem B with $p$ a fixed prime, then ${\rm Pr}_p(G)$ tends to $0$ as $|G|$ tends to infinity; we refer the reader to the end of Section \ref{s:thmb} for further details. 

Our next result, which may be of independent interest, is a key ingredient in the proof of Theorem A. Recall that $\bO_p(G)$ denotes the largest normal $p$-subgroup of $G$.
 
\begin{thmC} \label{thmC} 
Let $G$ be a finite group and let $p$ be a prime. Then 
\[
\frac{|{\cent Gx}_p|}{|G_p|} \leqs \frac{1}{p}
\]
for every $p$-element $x \in G \setminus{\bO_p(G)}$.
\end{thmC}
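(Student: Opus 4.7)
The plan is to prove Theorem C by induction on $|G|$, combined with a reduction to the almost simple setting, where the fixed point ratio (fpr) results of the first two authors mentioned in the abstract become available. A useful reformulation comes from double counting the set $\{(y,z) \in G_p \times x^G : [y,z]=1\}$: summing over $y$ gives $\sum_{y \in G_p}|x^G \cap \cent{G}{y}|$, while summing over $z \in x^G$ gives $|x^G|\cdot|\cent{G}{x}_p|$. Hence
\[
\frac{|\cent{G}{x}_p|}{|G_p|} \;=\; \frac{1}{|G_p|}\sum_{y \in G_p} \frac{|x^G \cap \cent{G}{y}|}{|x^G|},
\]
so Theorem C is equivalent to the claim that the mean fixed point ratio of $p$-elements $y \in G_p$ on the conjugation action of $G$ on $x^G$ is at most $1/p$.

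Proceeding by induction, assume $(G,x)$ is a minimal counterexample. The first reduction handles a nontrivial normal $p$-subgroup $1 \ne N \normeq G$: every $p$-element of $G/N$ has exactly $|N|$ preimages in $G$, all $p$-elements, so $|G_p| = |N|\cdot|(G/N)_p|$, and the preimage of $\cent{G/N}{\bar x}$ in $G$ contains $\cent{G}{x}$ and has $|N|\cdot|\cent{G/N}{\bar x}_p|$ $p$-elements. Thus $|\cent{G}{x}_p|/|G_p| \leqs |\cent{G/N}{\bar x}_p|/|(G/N)_p|$, and since $\bar x \notin \bO_p(G/N) = \bO_p(G)/N$, the inductive hypothesis yields the desired bound. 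A parallel reduction handles a nontrivial normal $p'$-subgroup via Schur--Zassenhaus and the induced correspondence between Sylow $p$-subgroups and $p$-elements. After both reductions one may assume $\bO_p(G) = \bO_{p'}(G) = 1$, so the generalised Fitting subgroup $F^*(G) = T_1 \times \cdots \times T_k$ is a direct product of pairwise isomorphic nonabelian simple groups, each of order divisible by $p$.

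In this reduced configuration the kernel of the $G$-action on $x^G$ is contained in $\cent{G}{F^*(G)} = 1$, so $G$ acts faithfully. Projecting onto the simple factors of $F^*(G)$ and passing to block systems, the problem reduces to bounding fixed point ratios of $p$-elements in primitive actions of almost simple groups whose socle has order divisible by $p$. Here I would invoke the recent Burness--Guralnick fpr theorems to bound each $|x^G \cap \cent{G}{y}|/|x^G|$, and assemble these individual bounds to verify that the mean over $y \in G_p$ is at most $1/p$.

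The hardest step, and the main obstacle, is precisely this final fpr analysis: because the trivial element $y=1$ contributes the full value $1$ to the sum, a uniform individual bound of $1/p$ for $y \ne 1$ is not by itself sufficient. The argument must therefore exploit the strictly sharper fpr bounds — often of order $1/q$, where $q$ is the defining characteristic of the socle — supplied by the Burness--Guralnick work for non-identity $p$-elements, in order to offset the trivial contribution. Matching the sharpness phenomenon exhibited in Theorem B for $G = \PSL_2(p)$ will in particular require a careful case-by-case treatment of the small almost simple groups.
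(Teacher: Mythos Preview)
Your proposed reduction modulo a normal $p'$-subgroup $N$ is a genuine gap, and it is where your argument diverges from the paper. Unlike the $p$-subgroup case, there is no reason for the inequality
\[
\frac{|\cent{G}{x}_p|}{|G_p|} \;\leqs\; \frac{|\cent{G/N}{\bar x}_p|}{|(G/N)_p|}
\]
to hold: the fibres $(Ny)_p = y^N$ of the natural map $G_p\to(G/N)_p$ have varying sizes $|N:\cent{N}{y}|$, and it can easily happen that the $p$-elements $\bar y$ commuting with $\bar x$ have much larger fibres than those that do not, so the left-hand ratio can exceed the right. (The paper itself exhibits a group with $\PPr_2(G) > \PPr_2(G/N)$ for a normal subgroup $N$ of order $3$, which is the averaged version of exactly this failure.) Even granting the inequality, your induction still breaks: if $G$ is $p$-solvable with $\bO_p(G)=1$ then $\bF^*(G)=\bO_{p'}(G)$, and after passing to $G/\bO_{p'}(G)$ the image $\bar x$ lands in $\bO_p$ of the quotient, so the inductive hypothesis no longer applies. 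The paper does \emph{not} quotient by $\bO_{p'}(G)$; instead, when $[x,K]\ne 1$ for some normal $p'$-subgroup $K$, it proves the bound directly by a coset-by-coset coprime-action argument (Theorem~\ref{thmB1}, resting on Proposition~\ref{special1} and Lemma~\ref{action}), and this already requires real work. Only when $x$ centralises $\bF(G)$ does one pass to the component analysis --- and incidentally the components of $\bF^*(G)$ need not be pairwise isomorphic, contrary to your claim.

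Your endgame is also not the paper's, and your sketch underestimates what is needed. Rather than averaging fixed point ratios over all of $G_p$ and appealing to sharper-than-$1/p$ bounds to absorb the contribution of $y=1$, the paper fixes the relevant component subgroup $K$ and shows that for \emph{each} coset $Ky$ the proportion of $p$-elements in $(Ky)_p$ commuting with $x$ is at most $1/p$. The key inputs are the $1/(p+1)$ bound (not merely $1/p$) from the Burness--Guralnick theorem on each nontrivial $K$-class, together with the elementary fact $|K_p|\geqs p^2$ (Lemma~\ref{l:classes}) to handle the identity in the trivial coset. There are also genuine exceptions to the fpr bound --- transvections in $\Omega_n^+(2)$ for $p=2$, and the subset actions of $A_n$ and $S_n$ --- which force separate arguments (Propositions~\ref{p:lie} and~\ref{p:sym}); your ``often of order $1/q$'' heuristic does not cover these.
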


This can be extended as follows. 

\begin{thmD}\label{thmD}
Let $G$ be a finite group and let $p$ be a prime.  If $x \in G$ is a $p$-element and
\[
\frac{|{\cent Gx}_p|}{|G_p|} > \frac{1}{p},
\]
then $x \in \bZ(\bO_p(G))$.
\end{thmD}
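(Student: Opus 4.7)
The plan is to reduce to Theorem~C and then settle the remaining case by a short coset-packing argument. Theorem~C already shows that the hypothesis $|{\cent Gx}_p|/|G_p| > 1/p$ forces $x \in \bO_p(G)$, so I set $P = \bO_p(G)$, assume $x \in P$, and argue the contrapositive of what remains: if $x \in P \setminus \zent P$, then $|{\cent Gx}_p|/|G_p| \leqs 1/p$.

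So suppose $x \in P \setminus \zent P$. Then $\cent Px = P \cap \cent Gx$ is a proper subgroup of the $p$-group $P$, so $[P : \cent Px] \geqs p$. Since $P\cent Gx / \cent Gx \cong P/\cent Px$ has order at least $p$, I can choose representatives $y_0 = 1, y_1, \ldots, y_{p-1} \in P$ of $p$ distinct right cosets of $\cent Gx$ in $G$.

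The crucial observation is that for any $p$-element $g$ of $\cent Gx$, the subgroup $P\langle g\rangle$ is the product of the normal $p$-subgroup $P$ with the $p$-subgroup $\langle g\rangle$, and hence is itself a $p$-group. Therefore $y_i g \in P\langle g\rangle \subseteq G_p$ for every $i$, so each set $y_i(\cent Gx \cap G_p)$ is contained in $G_p$. These $p$ sets are pairwise disjoint, for if $y_i g = y_j h$ with $g, h \in \cent Gx \cap G_p$, then $y_j^{-1} y_i = h g^{-1} \in \cent Gx$, forcing $i = j$ by the choice of representatives. Summing cardinalities gives $p \cdot |{\cent Gx}_p| \leqs |G_p|$, as required.

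I do not expect a real obstacle here, given Theorem~C: the only points to verify are that multiplication by $y_i \in P$ sends $p$-elements of $\cent Gx$ into $G_p$ (which uses only that $P$ is a normal $p$-subgroup) and that enough representatives sit inside $P$ (which uses only $x \notin \zent P$). In effect, Theorem~D is the cheap consequence once the heavy lifting of Theorem~C has been done.
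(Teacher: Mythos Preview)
Your proof is correct and follows essentially the same approach as the paper: combine Theorem~C with a short coset argument inside $P=\bO_p(G)$, using that $[P:\cent Px]\geqs p$ and that $P\langle g\rangle$ is a $p$-group for any $p$-element $g$. The paper phrases the count dually (bounding the proportion of elements commuting with $x$ inside each coset $Qy$), but the content is the same; one small slip is that you say ``right cosets'' while your sets $y_i(\cent Gx\cap G_p)$ lie in the left cosets $y_i\cent Gx$, which is what your disjointness argument actually uses.
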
 

\begin{remk}\label{r:1}
It is easy to see that the converse of Theorem D is false. For example, if $G = D_{8(2m+1)}$, then $|{\cent Gx}_2|/|G_2| = 1/(2m+2)$ if $x \in \bZ(\bO_2(G))$ has order $4$. On the other hand, in Examples \ref{e:ex1} ($p$ odd) and \ref{e:ex2} ($p=2$) we present a family of examples $(G,p,x)$, where $x \in \bZ(\bO_p(G))$ is nontrivial and $|{\cent Gx}_p|/|G_p|$ tends to $1$ as $p$ tends to infinity.
\end{remk}

\begin{remk}\label{r:p} 
Let $G$ be a finite group with $\bO_p(G)=1$. Then the conclusions in Theorems A and C are still valid if we work with elements of order $p$ instead of all $p$-elements (with essentially no change in the proofs). And similarly for Theorem \ref{equality}, which includes Theorem B as a special case.
\end{remk}  

The proofs of our main results depend upon the classification of finite simple groups.  However, it is worth noting that our proof of Theorem C 
does not require the  classification if we assume that $x$ normalizes, but does not centralize, some normal
$p'$-subgroup of $G$. This implies that the classification is not required for Theorem A under the assumption that the generalized Fitting subgroup of $G$ is a $p'$-group (and so in particular, if $G$ is $p$-solvable). In order to handle the general case, we use a recent result of the first two authors \cite{BG} on fixed point ratios of elements of prime order in primitive permutation groups (see Theorem \ref{t:bg}). 

\begin{remk}\label{r:2}
Let us observe that
\[
\frac{|\bC_G(x)_p|}{|G_p|} = \frac{\Psi(x)}{\Psi(1)},
\]
where $\Psi$ is the permutation character for the action of $G$ on its $p$-elements by conjugation. In the language of permutation groups, this number coincides with the fixed point ratio of $x$ with respect to this action, which explains why the main theorem of  \cite{BG} will be an important ingredient in the proof of Theorem C. 
\end{remk}

\section{Some preliminary results}\label{s:prel}

For the remainder of this paper, all groups are finite and $p$ is a prime number. We will frequently use the elementary fact that if $G$ is a group and $H,K \leqs G$ are subgroups, then 
\begin{equation}\label{e:one}
|H:H \cap K| \leqs |G:K|
\end{equation}
with equality if and only if $G=HK$.

\begin{lem} \label{quop}
Let $G$ be a finite group and let $N$ be a normal $p$-subgroup.  
\begin{itemize}\addtolength{\itemsep}{0.2\baselineskip}
\item[{\rm (i)}] If $x \in G$ is a $p$-element, then 
\[
|{\cent Gx}_p|/|G_p| \leqs   |{\cent {G/N}{Nx}}_p|/|(G/N)_p|.
\]
\item[{\rm (ii)}]  ${\rm Pr}_p(G) \leqs {\rm Pr}_p(G/N)$.
\end{itemize}
\end{lem}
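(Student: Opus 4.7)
The plan is to exploit the fact that $N$ being a normal $p$-subgroup makes the quotient map $\pi: G \to G/N$ behave well with respect to $p$-elements. First I would record the elementary observation that an element $y \in G$ is a $p$-element if and only if $Ny$ is a $p$-element of $G/N$: one direction is immediate, and for the converse, if $(Ny)^{p^k} = N$ then $y^{p^k} \in N$ is itself a $p$-element, so some higher power of $y$ is trivial. Consequently $\pi$ restricts to a surjection $G_p \twoheadrightarrow (G/N)_p$ whose fibres are precisely the $|N|$-element cosets of $N$, giving
\[
|G_p| = |N| \cdot |(G/N)_p|.
\]

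For (i), fix a $p$-element $x \in G$. Since $[x,y]=1$ implies $[Nx,Ny]=N$, the projection restricts to a well-defined map $\pi: {\cent Gx}_p \to {\cent {G/N}{Nx}}_p$. The crucial step is to estimate the fibres: if $y \in {\cent Gx}_p$, then an element $yn$ with $n \in N$ lies in ${\cent Gx}$ iff $[x,yn]=1$ iff $n \in {\cent Nx}$ (using $[x,y]=1$), and every such $yn$ automatically lies in $G_p$ since $Ny \subseteq G_p$. Hence every nonempty fibre has size exactly $|{\cent Nx}| \leqs |N|$, which gives
\[
|{\cent Gx}_p| \leqs |N| \cdot |{\cent {G/N}{Nx}}_p|.
\]
Dividing by $|G_p| = |N| \cdot |(G/N)_p|$ now yields (i).

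For (ii), sum the inequality from (i) over $x \in G_p$, using that $|{\cent {G/N}{Nx}}_p|$ depends only on the coset $Nx$ and that each coset $\bar y \in (G/N)_p$ has exactly $|N|$ preimages in $G_p$. This converts the sum into
\[
\sum_{x \in G_p} |{\cent {G/N}{Nx}}_p| \;=\; |N| \sum_{\bar y \in (G/N)_p} |{\cent {G/N}{\bar y}}_p| \;=\; |N| \cdot |(G/N)_p|^2 \cdot {\rm Pr}_p(G/N).
\]
Combined with (i) and the formula for $|G_p|$, a short simplification gives ${\rm Pr}_p(G) \leqs {\rm Pr}_p(G/N)$.

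The only real subtlety is that the induced map ${\cent Gx}_p \to {\cent {G/N}{Nx}}_p$ is typically not surjective: one can have $[y,x] \in N \setminus \{1\}$, so $Ny$ commutes with $Nx$ in the quotient without $y$ commuting with $x$ in $G$. This is precisely what forces the inequality in (i) and (ii); otherwise the argument is purely bookkeeping, and no deeper structure of $G$ is required.
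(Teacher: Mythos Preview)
Your proof is correct and is precisely the argument the paper has in mind: the authors' one-line proof says only that both parts ``quickly follow from the fact that $|G_p| = |(G/N)_p||N|$,'' and your write-up is a careful unpacking of exactly this, including the fibre count $|\cent Nx|$ that makes (i) work. There is no substantive difference in approach.
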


\begin{proof}  
Both parts quickly follow from the fact that $|G_p| = |(G/N)_p||N|$.
\end{proof}  

\begin{rem}
In the previous lemma, the assumption that $N$ is a $p$-subgroup is essential. For example, there is a semidirect product $G = C_{35}{:}D_{12}$ with a normal subgroup $N$ of order $3$ such that  $G/N = D_{10} \times D_{14}$ and we compute 
\[
{\rm Pr}_2(G) = \frac{211}{1296} > \frac{11}{72} = {\rm Pr}_2(G/N).
\]
(Here $G$ is $\texttt{SmallGroup}(420,30)$ in the \textsf{GAP} Small Groups library \cite{GAP}.) One can check that this is the smallest finite group with ${\rm Pr}_p(G) > {\rm Pr}_p(G/N)$ for some prime $p$.
\end{rem}

There is a special case where quotients by normal subgroups of order prime to $p$ do not change the proportions.  

\begin{lem} \label{quop'} 
Let $G$ be a finite group and let $N$ be a central $p'$-subgroup.  
\begin{itemize}\addtolength{\itemsep}{0.2\baselineskip}
\item[{\rm (i)}] If $x \in G$ is a $p$-element, then 
\[
|{\cent Gx}_p|/|G_p|  =   |{\cent {G/N}{Nx}}_p|/|(G/N)_p|.
\]
\item[{\rm (ii)}]  If $N$ is central in $G$, then ${\rm Pr}_p(G) ={\rm Pr}_p(G/N)$.
\end{itemize}
\end{lem}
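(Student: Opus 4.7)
The central observation to set up is that when $N$ is a central $p'$-subgroup, the canonical projection $\pi \colon G \to G/N$ restricts to a bijection $G_p \to (G/N)_p$. Injectivity: if $x,y \in G_p$ satisfy $Nx = Ny$, then $n := xy^{-1} \in N$ commutes with $y$ (since $N$ is central), so $x = ny$ is a product of commuting elements of coprime orders, and $x$ being a $p$-element forces $n = 1$. Surjectivity: given a $p$-element $Ng \in G/N$, write $g = g_p g_{p'}$ for its commuting decomposition in $G$; the images $Ng_p$ and $Ng_{p'}$ are the $p$- and $p'$-parts of $Ng$ in $G/N$, and since $Ng$ is a $p$-element we get $Ng_{p'} = N$, hence $Ng = Ng_p$ with $g_p \in G_p$. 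In particular, $|G_p| = |(G/N)_p|$.

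For part (i), I would next show that the map $y \mapsto Ny$ restricts to a bijection $C_G(x)_p \to C_{G/N}(Nx)_p$. It is clearly well-defined and injective (as above). Surjectivity is the crux: given a $p$-element $Ny \in C_{G/N}(Nx)$, lift it to the unique $p$-element $y_0 \in G_p$ with $Ny_0 = Ny$; it remains to show $y_0 \in C_G(x)$. Setting $c := [y_0,x]$, the fact that $Ny_0$ commutes with $Nx$ gives $c \in N \subseteq \bZ(G)$. Thus $c$ is central in $\langle x, y_0\rangle$, so this subgroup has nilpotency class at most $2$, and in such a group the standard identity $[y_0^{m}, x] = [y_0,x]^{m}$ holds for all $m \geqs 1$. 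Taking $m = |y_0|$ yields $c^{|y_0|} = 1$; but $|y_0|$ is a $p$-power while $c \in N$ is a $p'$-element, forcing $c = 1$ and hence $y_0 \in C_G(x)_p$.

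Given the bijection just established, $|C_G(x)_p| = |C_{G/N}(Nx)_p|$, and combined with $|G_p| = |(G/N)_p|$ this proves (i). Part (ii) then follows by summing: writing
\[
\Pr_p(G) = \frac{1}{|G_p|^2}\sum_{x \in G_p}|C_G(x)_p|,
\]
the bijection $G_p \to (G/N)_p$ together with part (i) (applied to each $x \in G_p$) converts this sum into the analogous expression for $G/N$, giving $\Pr_p(G) = \Pr_p(G/N)$.

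The only nontrivial step is the class-$2$ commutator calculation used to promote $[y_0,x] \in N$ to $[y_0,x] = 1$; everything else is routine bookkeeping with the $p$-part/$p'$-part decomposition and the centrality of $N$.
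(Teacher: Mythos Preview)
Your proof is correct and follows essentially the same route as the paper: both arguments hinge on the observation that each coset of $N$ contains at most one $p$-element (giving the bijection $G_p \to (G/N)_p$), together with the class-$2$ commutator identity to upgrade $[x,y] \in N$ to $[x,y] = 1$ when one of $x,y$ is a $p$-element. The paper's version is terser---it applies the identity to powers of $x$ rather than of $y_0$ and leaves the bijection implicit---but the substance is identical.
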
 

\begin{proof} 
Let $x \in G$ be a $p$-element and suppose that $[x,y] \in N$ for some $y \in G$.   Since $[x,N]=1$ it follows that $[x^p,y]=[x,y]^p$, so $[x,y]=1$. In addition, if $y$ is a $p$-element, then $y$ is the only $p$-element in the coset $Ny$ and so (i) follows.   Now (ii) follows from (i), noting that $G$ and $G/N$ both have the same number of $p$-elements.  
\end{proof}

\begin{lem}\label{action}
Let $P$ be a  $p$-group acting on a $p'$-group $K$ and let $L$ be a $P$-invariant subgroup of $K$.
 If 
 \begin{equation}\label{e:hyp}
 \frac{|\cent KP:\cent LP|}{|K:L|} <1,
 \end{equation}
 then 
 \[
 \frac{|\cent KP:\cent LP|}{|K:L|} \leqs \frac{1}{p+1}.
 \]
\end{lem}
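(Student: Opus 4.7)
Write $C = C_K(P)$, $f = |C : C \cap L|$, and $n = |K:L|$, so the ratio to bound is $f/n$. The plan is to study the action of $P$ on the set $\Omega$ of left cosets of $L$ in $K$; this is well-defined because $L$ is $P$-invariant, and $|\Omega| = n$ is coprime to $p$ since $n$ divides $|K|$.

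First, I identify the $P$-fixed cosets in $\Omega$ as exactly the set $\{cL : c \in C\}$, of size $f$. One direction is immediate. For the other, if $xL$ is $P$-fixed then $P$ acts on the set $xL$ of cardinality $|L|$ (coprime to $p$), so the standard fact that a $p$-group acting on a $p'$-set has a fixed point yields some $c \in C \cap xL$, whence $xL = cL$. Orbit counting on $\Omega$, where nontrivial $P$-orbits have size a positive power of $p$, then yields the congruence $n \equiv f \pmod{p}$.

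The crucial remaining step is to upgrade this congruence to the divisibility $f \mid n$. Granted this, $n/f$ is a positive integer with $n/f \equiv 1 \pmod{p}$, so $n/f \in \{1,\, p+1,\, 2p+1,\, \ldots\}$; under the hypothesis $f < n$ the first option is excluded, giving $n/f \geqs p+1$ and hence $f/n \leqs 1/(p+1)$, as required. For the divisibility I would pass to the semidirect product $G = K \rtimes P$: since $K$ is a $p'$-group, $P$ is a Sylow $p$-subgroup of $G$ with $N_G(P) = CP$, so $|\Syl_p(G)| = |K|/|C|$, and analogously $|\Syl_p(LP)| = |L|/|C\cap L|$. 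The Sylow $p$-subgroups of $LP$ are exactly those Sylow $p$-subgroups of $G$ lying in $LP$, and an orbit analysis of the $L$-action on $\Syl_p(G)$ by conjugation, together with the control afforded by the coprime setting, should give that the latter count divides the former, which rearranges to $f \mid n$. The case $L \trianglelefteq K$ is much easier: then $K/L$ is a group, the standard coprime-action identity gives $CL/L = C_{K/L}(P)$, and Lagrange applied to this subgroup of $K/L$ yields $f \mid n$ at once. The general non-normal case, where $\Omega$ is only a coset space and not a group, is where I expect the main technical obstacle to lie.
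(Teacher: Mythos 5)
Your reduction of the lemma to the single arithmetic claim $f \mid n$ (where $f = |\cent KP : \cent LP|$ and $n = |K:L|$) is a genuinely nice idea, and the congruence half of your argument is correct: the $P$-fixed cosets in $K/L$ are exactly $\{cL : c \in \cent KP\}$ by the fixed-point lemma for coprime actions, there are $f$ of them, and orbit counting gives $n \equiv f \pmod{p}$; since $f$ divides $|K|$ it is prime to $p$, so $f \mid n$ would force $n/f \equiv 1 \pmod p$ and hence $n/f \geqs p+1$ once $n/f > 1$. The gap is that you have not proved $f \mid n$, and the route you sketch for it does not work. Rewriting $n/f = |K : \cent KP| \big/ |L : \cent LP|$ as a ratio of Sylow counts in $G = K{:}P$ and $LP$ is correct as far as it goes, but ``an orbit analysis of the $L$-action on $\Syl_p(G)$'' only exhibits the $L$-orbit of $P$, of size $|L:\cent LP|$; the remaining $L$-orbits have sizes $|L : L \cap \cent KP^{\,k}|$ for various $k \in K$, and these need not be equal to one another, so no divisibility falls out. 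Note that the divisibility is genuinely not a fact about arbitrary subgroups $C, L \leqs K$ (take $K = S_3$ and $C, L$ two distinct subgroups of order $2$: then $|L : L \cap C| = 2$ does not divide $|K:C| = 3$), so it must use the coprime-action structure, and your argument never does.

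The missing step can be supplied, and it is exactly the device with which the paper's own proof opens: choose, for each prime $q$ dividing $|K|$, a $P$-invariant Sylow $q$-subgroup $L_q$ of $L$ contained in a $P$-invariant Sylow $q$-subgroup $K_q$ of $K$. By coprime action, $\cent KP \cap K_q$ and $\cent LP \cap L_q$ are Sylow $q$-subgroups of $\cent KP$ and $\cent LP$, so
\[
\frac{n}{f} \;=\; \frac{|K : \cent KP|}{|L : \cent LP|} \;=\; \prod_q \frac{|K_q : \cent {K_q}P|}{|L_q : \cent {L_q}P|},
\]
and each factor is a quotient of two powers of $q$ with numerator at least the denominator (by the index inequality $|L_q : L_q \cap \cent{K_q}{P}| \leqs |K_q : \cent{K_q}{P}|$), hence a nonnegative power of $q$. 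Thus $n/f$ is a positive integer and your congruence finishes the proof. With this repair your argument becomes a genuine alternative to the paper's: the paper uses the same Sylow decomposition to reduce to $K$ a $q$-group, but then proceeds by induction to the case of a maximal $P$-invariant $L$ and extracts the bound $|K:L| \geqs p+1$ from a Frobenius action of a cyclic quotient of $P$ on $K/L$, whereas your congruence-plus-divisibility argument replaces that entire second half by elementary counting. The case $L \normeq K$ that you do handle completely is, unfortunately, not the case the lemma is needed for.
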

 
\begin{proof}
Let $C=\cent K P$ and note that $K \ne CL$ in view of the inequality in \eqref{e:hyp}. For any prime $q$,
let $L_q$ be a $P$-invariant Sylow $q$-subgroup of $L$, which is contained
in a $P$-invariant Sylow $q$-subgroup $K_q$ of $K$ (see \cite[Corollary 3.25]{Is}).
Thus $K_q\cap L=L_q$.
By coprime action, $C_q:= C \cap K_q$ and $C \cap L_q$ are Sylow $q$-subgroups of
$C$ and $C \cap L$, respectively (see \cite[Lemma 3.32]{Is}, for example). 
In view of \eqref{e:hyp} we have
\[
\prod_q \frac{|C_q: C_q \cap L|}{|K_q:L_q|} = \frac{|C:C \cap L|}{|K:L|} < 1
\]
and we note that 
\[
\frac{|C_q: C_q \cap L|}{|K_q:L_q|}  = \frac{|C_q: C_q \cap L_q|}{|K_q:L_q|} \leqs 1
\]
for every prime $q$ (see \eqref{e:one}). Therefore,   
\[
\frac{|C:C \cap L|}{|K:L|} \leqs \frac{|C_q: C_q \cap L|}{|K_q:L_q|}
\]
for every $q$, so the bound in \eqref{e:hyp} implies that
\[
\frac{|C_q: C_q \cap L|}{|K_q:L_q|}<1
\]
for some $q$. As a consequence, we are free to assume that $K$ is a $q$-group.

Arguing by induction on $|K:L|$, we may assume that $L$ is a maximal $P$-invariant subgroup of $K$. Then $L$ is normal in $K$ and $K/L$ does not have any proper nontrivial $P$-invariant subgroups, whence \eqref{e:hyp} implies that $C=\cent KP=\cent LP$. If $|\cent Ky:\cent Ly|=|K:L|$ for every $y \in P$, then $P$ acts trivially on $K/L$ and thus $K=CL$, which is incompatible with \eqref{e:hyp}. Therefore, we may assume that $P=\langle y \rangle$ is cyclic. Then the action of $P$ on $K/L$ is a Frobenius action, which implies that if $x \in K \setminus L$, then $\{L, Lx^z \,:\, z \in P\}$ is a set of distinct cosets of $L$ in $K$. Therefore $|K:L|\geqs |P|+1 \geqs p+1$, as required.
\end{proof}

Next we record the following well known result. 

\begin{lem}\label{elem}
Let $G$ be a finite group,   let $x,y \in G$ and let $K \leqs G$ be a subgroup  normalized by $x$ and $y$. If  $Kx=Ky$ and $|K|$ is coprime with $o(x)o(y)$,
then $x$ and $y$ are $K$-conjugate.
\end{lem}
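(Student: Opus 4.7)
The plan is to reduce the statement to the Schur-Zassenhaus theorem applied inside the subgroup $H = K\langle x,y\rangle$, and then upgrade the resulting subgroup-conjugacy to element-conjugacy.

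First I would rewrite the hypothesis $Kx = Ky$ as $y = kx$ for some $k \in K$, and set $H = K\langle x\rangle$; since $y \in H$, this equals $K\langle x, y\rangle$. Because both $x$ and $y$ normalize $K$, we have $K \trianglelefteq H$, and the quotient $H/K$ is cyclic, generated by $Kx = Ky$, of order dividing $o(x)$. The coprimality hypothesis $(|K|, o(x)o(y)) = 1$ therefore gives $(|K|, |H/K|) = 1$.

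Next I would observe that both $\langle x\rangle$ and $\langle y\rangle$ are complements of $K$ in $H$: each meets $K$ trivially because its order is coprime to $|K|$, and each generates $H$ together with $K$. The Schur-Zassenhaus theorem, whose conjugacy conclusion is available here since $H/K$ is cyclic and hence solvable, then yields an element $k_0 \in K$ with $\langle x\rangle^{k_0} = \langle y\rangle$.

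The only subtle step will be to extract $x^{k_0} = y$ from this subgroup-level conjugacy. For this I would argue as follows: since $K \trianglelefteq H$ and $k_0 \in K$, we have $Kx^{k_0} = Kx = Ky$, so $x^{k_0}$ lies in the coset $Ky$. On the other hand $x^{k_0} \in \langle y\rangle$, and $\langle y\rangle \cap K = 1$ means that the natural map $\langle y\rangle \to H/K$ is injective, so $Ky \cap \langle y\rangle = \{y\}$, forcing $x^{k_0} = y$. The only substantive input is Schur-Zassenhaus, and I do not anticipate any genuine obstacle beyond that.
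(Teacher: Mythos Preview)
Your proof is correct and follows essentially the same route as the paper's: reduce to $H=K\langle x\rangle=K\langle y\rangle$, identify $\langle x\rangle$ and $\langle y\rangle$ as complements to $K$ in $H$, apply Schur--Zassenhaus to get $\langle x\rangle^{k_0}=\langle y\rangle$ for some $k_0\in K$, and then use $Kx^{k_0}=Kx=Ky$ together with $\langle y\rangle\cap K=1$ to force $x^{k_0}=y$. The paper phrases the last step as $x^k=y^n$ with $y^ny^{-1}\in K\cap\langle y\rangle=1$, which is the same coset argument in slightly different words.
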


\begin{proof}
We may assume $G=K\langle x,y\rangle$ and thus $K$ is normal in $G$.
Since $Kx=Ky$, it follows that $K\langle x\rangle=K\langle y\rangle=G$.
Now, $K \cap \langle x \rangle=K \cap \langle y \rangle = 1$ and we also note that  $o(x)=o(y)$ and $\langle x\rangle$, $\langle y\rangle$
are Hall $\pi$-subgroups of $G$, where $\pi$ is the set of primes
dividing $o(x)$.  By the Schur-Zassenhaus theorem,
we have $\langle x\rangle^k=\langle y\rangle$
for some $k \in K$ and thus $x^k=y^n$ for some integer $n$.
Now, $Ky=Kx=Kx^k=Ky^n$ and $y^ny^{-1} \in K \cap \langle y\rangle=1$, so $y^n=y$ and the result follows.
\end{proof}

We shall need one more well known fact about coprime actions, which follows from \cite[Theorem 3.27]{Is}.  

\begin{lem}\label{actions}
Let $G$ and $A$ be finite groups with coprime orders and suppose that $A$ acts on $G$ by automorphisms. Set $C=\cent GA$.
Then $G =C[A,G]$ and $[A,[A,G]]=[A,G]$.
\end{lem}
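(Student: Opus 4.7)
The plan is to derive both conclusions directly from the cited coprime fixed-point theorem (Theorem 3.27 in \cite{Is}), which says that for a coprime action of $A$ on $G$ and any $A$-invariant normal subgroup $N \normeq G$, one has $\cent{G/N}{A} = CN/N$.

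First I would prove $G = C[A,G]$. The subgroup $H := [A,G]$ is generated by commutators $[a,g]$, hence is normal in $G$ and $A$-invariant; by construction $A$ acts trivially on the quotient $G/H$. Applying the cited theorem with $N = H$, one finds
\[
G/H = \cent{G/H}{A} = CH/H,
\]
so $G = CH = C[A,G]$, as claimed.

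For the second statement, I would carry out a short commutator manipulation using the first part. Let $a \in A$ and $g \in G$, and write $g = c g_1$ with $c \in C$ and $g_1 \in [A,G]$. The identity $[a,xy] = [a,y]\,[a,x]^y$, together with $[a,c] = 1$, gives
\[
[a,g] = [a, c g_1] = [a,g_1]\,[a,c]^{g_1} = [a,g_1] \in [A,[A,G]].
\]
Since such commutators generate $[A,G]$, this yields $[A,G] \leqs [A,[A,G]]$. The reverse inclusion is immediate, so the two subgroups coincide.

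There is no real obstacle here beyond quoting the Isaacs theorem correctly; once $G = C[A,G]$ is in hand, the equality $[A,[A,G]] = [A,G]$ is a routine consequence of the commutator identity above. The entire lemma is classical and appears in many references on coprime action.
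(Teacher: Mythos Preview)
Your proof is correct and follows exactly the route the paper indicates: the paper does not spell out a proof but simply records that the lemma ``follows from \cite[Theorem 3.27]{Is}'', and your argument is precisely the standard derivation from that theorem. There is nothing to add.
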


\section{Proofs of Theorems C and D}\label{s:thmcd}

In this section we prove Theorems C and D. We begin by handling a special case of Theorem C, which relies on the following proposition. In part (i), we write $(Ky)_p$ for the set of $p$-elements in the coset $Ky$, where $p$ is a fixed prime throughout this section.

\begin{prop} \label{special1}  
Let $G$ be a finite group and let $K$ be a normal $p'$-subgroup of $G$. Let $x \in G$ be an element of order $p$ such that $K=[x,K]$ and let $y \in G$ be a $p$-element with $[x,y]=1$.
\begin{itemize}\addtolength{\itemsep}{0.2\baselineskip}
\item[{\rm (i)}] If $[y,K] \ne 1$, then the proportion of elements in $y^K = (Ky)_p$ which 
commute with $x$ is at most $1/(p+1)$.
 \item[{\rm (ii)}] If $L =\langle K, x \rangle$, then the proportion of $p$-elements in the coset $Ly$ which commute with $x$ is at most $1/p$.
 \end{itemize} 
 \end{prop}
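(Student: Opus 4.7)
The plan is to handle part (i) by a coprime-action argument together with Lemma~\ref{action}, and then bootstrap to part (ii) by applying part (i) coset-by-coset. For part (i), I would first identify $(Ky)_p$ with the conjugacy class $y^K$ via Schur--Zassenhaus in $K\langle y\rangle$: every $p$-element of $Ky$ lies in a Sylow $p$-subgroup $K$-conjugate to $\langle y\rangle$, and the coset constraint forces it to be $K$-conjugate to $y$ itself. Writing $C_1=C_K(x)$, $C_2=C_K(y)$ and $C=C_1\cap C_2$, I would next show $y^K\cap C_G(x)=y^{C_1}$: if $y^k\in C_G(x)$, then $(y^k)^x=y^{k^x}$ forces $k^xk^{-1}\in C_2$, so the right coset $C_2k$ is $x$-invariant; since $|C_2k|=|C_2|$ is coprime to $p$, the $x$-action on $C_2k$ admits a fixed point $c\in C_1\cap C_2k$, and $y^c=y^k$ because $ck^{-1}\in C_2$. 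This yields
\[
\frac{|y^K\cap C_G(x)|}{|y^K|}=\frac{|C_1:C|}{|K:C_2|},
\]
which is exactly the ratio controlled by Lemma~\ref{action} with $P=\langle x\rangle$ and the $x$-invariant subgroup $L=C_2$.

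The key step will be to rule out the equality case $K=C_1C_2$. Writing $k=c_1c_2$ with $c_i\in C_i$, the commutator identity $[x,c_1c_2]=[x,c_2][x,c_1]^{c_2}=[x,c_2]\in C_2$ gives $[x,K]\sbs C_2$, and combined with $K=[x,K]$ this forces $K=C_2$, i.e.\ $[y,K]=1$, contradicting the hypothesis. Hence the ratio is strictly less than $1$, and Lemma~\ref{action} delivers the bound $1/(p+1)$.

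For part (ii), since $x$ has order $p$ and $K$ is a $p'$-group the cosets $Kx^ay$ ($0\leqs a\leqs p-1$) are distinct, so $Ly=\bigsqcup_{a=0}^{p-1}Kx^ay$ and $(Ly)_p=\bigsqcup_{a=0}^{p-1}(x^ay)^K$ by the same Schur--Zassenhaus argument. Whenever $[x^ay,K]\ne 1$, part (i) applies to $(G,K,x,x^ay)$ (the needed condition $[x,x^ay]=1$ follows from $[x,y]=1$), giving
\[
|(x^ay)^K\cap C_G(x)|\leqs\frac{1}{p+1}|(x^ay)^K|\quad\text{and}\quad|(x^ay)^K|\geqs p+1,
\]
the latter because $x^ay\in(x^ay)^K\cap C_G(x)$. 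Letting $\alpha$ and $\beta$ denote the automorphisms of $K$ induced by $x$ and $y$, we have $\alpha$ of order $p$ and $c_{x^ay}=\alpha^a\beta$, so at most one index $a_0$ can satisfy $x^{a_0}y\in C_G(K)$.

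If no such $a_0$ exists, summing the bound above over all $a$ gives a proportion at most $1/(p+1)<1/p$. Otherwise the exceptional coset $(x^{a_0}y)^K=\{x^{a_0}y\}$ contributes $1$ to both numerator and denominator, while for each $a\ne a_0$ the identity $c_{x^ay}=\alpha^{a-a_0}$ (a generator of $\langle\alpha\rangle$ since $p$ is prime) shows that $C_K(x^ay)=C_1$, so $|(x^ay)^K|=|K:C_1|\geqs p+1$ and $|(x^ay)^{C_1}|=1$. Assembling these contributions,
\[
\frac{|(Ly)_p\cap C_G(x)|}{|(Ly)_p|}=\frac{p}{1+(p-1)|K:C_1|}\leqs\frac{p}{1+(p-1)(p+1)}=\frac{1}{p}.
\]
I expect the main obstacle to be the equality-case analysis in part (i); once $K\ne C_1C_2$ is excluded, Lemma~\ref{action} combined with routine bookkeeping closes out both parts.
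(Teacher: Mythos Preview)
Your proof is correct and follows essentially the same route as the paper: identify $(Ky)_p=y^K$ and $y^K\cap\cent Gx=y^{C_1}$, reduce to the ratio $|C_1:C|/|K:C_2|$, exclude equality, and invoke Lemma~\ref{action}; then for (ii) decompose $Ly$ into the $p$ cosets $Kx^ay$ and apply (i) coset-by-coset, treating separately the at most one coset with $[x^ay,K]=1$.

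Two small differences are worth noting. First, to rule out the equality case in (i) the paper observes that if every element of $y^K$ commutes with $x$ then $[y,K]\leqs\cent Kx$ and invokes the three subgroups lemma to get $[y,[x,K]]=1$; your direct commutator computation $[x,c_1c_2]=[x,c_2]\in C_2$ (using $[x,y]=1$ to see that $[x,c_2]$ is centralized by $y$) is an equivalent but more hands-on way to reach $K=[x,K]\leqs C_2$. Second, in the exceptional branch of (ii) you sharpen the paper's estimate: rather than bounding each non-exceptional coset by $a_i\leqs b_i/(p+1)$ and $b_i\geqs p+1$, you note that $x^ay$ acts on $K$ as $\alpha^{a-a_0}$, hence $C_K(x^ay)=C_1$, so in fact $a_i=1$ and $b_i=|K:C_1|$ exactly. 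This yields the clean closed form $p/(1+(p-1)|K:C_1|)\leqs 1/p$, which is a nice refinement of the paper's estimate $\alpha\leqs 1/(p+1)+p/((p+1)m)$ with $m\geqs p^2$.
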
 
  
\begin{proof}  
First consider (i). Since $K$ is a $p'$-group, Lemma \ref{elem} implies that $y^K$ is precisely the set of $p$-elements in the coset $Ky$. Next observe that $y^K \cap \cent Gx = (yK)_p \cap  \cent Gx = (Ay)_p$, where $A = \bC_K(x)$, and another application of  Lemma \ref{elem} gives $(Ay)_p = y^A$. Therefore, the proportion of elements in $y^K$ which commute with $x$ is equal to
\begin{equation}\label{e:prop}
\frac{|y^K \cap \cent Gx|}{|y^K|}  = \frac{|\cent Kx:\cent Kx \cap \cent Ky|}{|K:\cent Ky|}.
\end{equation}
If every element in $y^K$ commutes with $x$, then $[y,K] \leqs \cent Kx$. But then the three subgroups lemma implies that $y$ centralizes $[x,K]=K$, which is incompatible with the condition $[y,K] \ne 1$ in (i). Therefore, the proportion in \eqref{e:prop} is less than $1$ and by applying Lemma \ref{action} (with $L=\cent Ky$ and $P = \langle x \rangle$) we deduce that it is at most $1/(p+1)$ as required.

We now prove (ii). For $0 \leqs i < p$, let $a_i$ be the number of $p$-elements in $Kx^iy$ commuting with $x$ and let $b_i = |(Kx^iy)_p|$, so $\alpha = \sum_i a_i /\sum_i b_i$ is the proportion of $p$-elements in $Ly$ which commute with $x$. If $[x^iy,K] \ne 1$ for all $i$, then (i) implies that $a_i/b_i \leqs 1/(p+1)$ and we immediately deduce that $\alpha \leqs 1/(p+1)$. Therefore, we may assume $[y,K] =1$ (otherwise replace $y$ by $x^iy$ for some $i$). For $1 \leqs i < p$ it follows that $[x^iy, K] \ne 1$ (since $[x,K]=K$) and thus $a_i/b_i \leqs 1/(p+1)$. Since $|y^K|=1$ we have $a_0 = b_0 = 1$ and we deduce that 
\[
\alpha \leqs \frac{1}{p+1} + \frac{p}{(p+1)m},
\]
where $m = |(Ly)_p|$. Finally, we note that $b_i \geqs (p+1)a_i \geqs p+1$ for $1 \leqs i < p$ (since $x^iy \in Kx^iy$ is a $p$-element commuting with $x$), so $m \geqs 1+(p-1)(p+1) = p^2$ and we conclude that $\alpha \leqs 1/p$.
\end{proof}

We are now ready to prove a special case of Theorem C.

\begin{thm}\label{thmB1}   
Let $G$ be a finite group and let $x \in G$ be an element of order $p$. If there exists a normal $p'$-subgroup $K$ of $G$ with $[x,K] \ne 1$, then 
\begin{equation}\label{e:two}
\frac{|{\cent Gx}_p|}{|G_p|} \leqs \frac{1}{p}.
\end{equation}
\end{thm}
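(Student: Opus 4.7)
The plan is to reduce to the case $K = [x,K]$ already handled by Proposition \ref{special1}(ii). Set $M := [x,K]$. By Lemma \ref{actions} applied to the coprime action of $\langle x\rangle$ on the $p'$-group $K$, we have $[x,M] = M$, and $M \neq 1$ because $[x,K] \neq 1$. Set $L_0 := \langle M, x\rangle$ and decompose $G$ into left cosets of $L_0$. Letting $g$ range over a set of coset representatives,
\[
\frac{|{\cent Gx}_p|}{|G_p|} = \frac{\sum_g |L_0 g \cap {\cent Gx}_p|}{\sum_g |L_0 g \cap G_p|},
\]
so the theorem will follow once we show $|L_0 g \cap {\cent Gx}_p| \leqs (1/p)\,|L_0 g \cap G_p|$ for each coset $L_0 g$.

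Fix such a coset. If it contains no $p$-element commuting with $x$, the bound is trivial, so choose a $p$-element $y \in L_0 g$ with $[x,y] = 1$; then $L_0 g = L_0 y$. Form the subgroup $H := \langle M, x, y\rangle$. Since $K$ is normal in $G$, the element $y$ normalizes $K$, and because $[x,y] = 1$ it also normalizes $M = [x,K]$; together with the obvious $M^x = M$, this shows that $M$ is a normal $p'$-subgroup of $H$ satisfying $[x,M] = M$. Proposition \ref{special1}(ii), applied inside $H$ with $M$ in the role of $K$ and $y$ as the $p$-element centralizing $x$, then gives that the proportion of $p$-elements in $L_0 y$ commuting with $x$ is at most $1/p$. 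This proportion depends only on which elements of the subset $L_0 y$ have $p$-power order and which commute with $x$, so it is unchanged when $L_0 y$ is viewed inside $G$ rather than $H$, and the required bound follows.

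The main obstacle to invoking Proposition \ref{special1}(ii) directly on $G$ is that $M = [x,K]$ need not be normal in $G$. The workaround above localizes the argument: within each coset $L_0 g$ one only needs a subgroup in which $M$ is normal, and the subgroup $H = \langle M, x, y\rangle$ built from a single commuting $p$-element $y$ in that coset is exactly large enough. Summing the coset estimates then yields $|{\cent Gx}_p| \leqs |G_p|/p$, completing the proof.
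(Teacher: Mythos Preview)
Your proof is correct and follows essentially the same route as the paper: replace $K$ by $M = [x,K]$ (using Lemma~\ref{actions} to get $[x,M]=M$), partition $G$ into cosets of $\langle M,x\rangle$, and on each coset containing a $p$-element commuting with $x$ invoke Proposition~\ref{special1}(ii). The only difference lies in how the normality of $M$ is arranged: the paper first reduces to $\bO_p(G)=1$ and to $G = K\cent Gx$, after which $M=[x,K]$ is normal in $G$ and Proposition~\ref{special1}(ii) applies directly inside $G$; you instead localize, passing on each coset to the subgroup $H=\langle M,x,y\rangle$, where $M$ is normal because $[x,y]=1$ forces $M^y=[x^y,K^y]=[x,K]=M$. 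Both devices accomplish the same thing, and your observation that the coset $L_0y$ lies entirely in $H$ (so the relevant counts agree in $H$ and in $G$) is exactly what is needed.

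One trivial slip: you write ``left cosets of $L_0$'' but your notation $L_0g$ and the entire argument use right cosets.
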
 

\begin{proof}  
By Lemma \ref{quop}, we may assume that $\bO_p(G) = 1$. We can also assume that $G=K\cent Gx$ and we may replace $K$ by any proper normal subgroup of 
$G$ contained in $K$ that does not centralize $x$. In particular, by Lemma \ref{actions}, we can replace $K$ by $[x,K]$ and so we may assume that $K=[x,K]$.  

Set $L = \langle K, x \rangle$ and let $y \in G$ be a $p$-element. It suffices to show that the proportion of $p$-elements in the coset $Ly$ which commute with $x$ is at most $1/p$. Clearly, if no $p$-element in $Ly$ commutes with $x$, then this proportion is $0$, so we may assume $[x,y]=1$. Now apply Proposition \ref{special1}(ii).  
\end{proof}

\begin{rem}
Let $\bF^*(G)$ be the generalized Fitting subgroup of $G$. If $\bF^*(G)$ is a $p'$-group, then $\bO_p(G)=1$ and the statement of Theorem \ref{thmB1} holds for every nontrivial $p$-element $x$ because we can replace $x$ by an element of order $p$ in $\langle x \rangle$. Of course, if the upper bound in \eqref{e:two} holds for all elements in $G$ of order $p$ (modulo $\bO_p(G)$), then the same bound holds for every nontrivial $p$-element in $G$.  
\end{rem}

Recall that if $G$ is a permutation group on a finite set $\Omega$, then the \emph{fixed point ratio} of an element $z \in G$, denoted ${\rm fpr}(z,\Omega)$, is the proportion of points in $\Omega$ fixed by $z$. It is easy to see that if $G$ is transitive and $H$ is a point stabilizer, then
\[
{\rm fpr}(z,\Omega) = \frac{|z^G \cap H|}{|z^G|}.
\]
The following is a simplified version of the main theorem of \cite{BG}. 

\begin{thm}\label{t:bg}
Let $G \leqs {\rm Sym}(\Omega)$ be a finite primitive permutation group with point stabilizer $H$. If $z \in G$ has prime order $p$, then either
\[
{\rm fpr}(z,\Omega) \leqs \frac{1}{p+1},
\]
or one of the following holds (up to permutation isomorphism):
\begin{itemize}\addtolength{\itemsep}{0.2\baselineskip}
\item[{\rm (i)}] $G$ is almost simple and either 

\vspace{1mm}

\begin{itemize}\addtolength{\itemsep}{0.2\baselineskip}
\item[{\rm (a)}] $G = S_n$ or $A_n$ acting on $k$-element subsets of $\{1, \ldots, n\}$ with $1 \leqs k < n/2$; or
\item[{\rm (b)}] $(G,H,z,{\rm fpr}(z,\Omega))$ is known.
\end{itemize}
\item[{\rm (ii)}] $G$ is an affine group, $\bF^*(G)=\bF(G)=(C_p)^d$, $z \in {\rm GL}_{d}(p)$ is a 
transvection and ${\rm fpr}(z,\Omega) = 1/p$.
\item[{\rm (iii)}] $G \leqs A \wr S_t$ is a product type group with its product action on $\Omega = \Gamma^t$ and $z \in A^t \cap G$, where $A \leqs {\rm Sym}(\Gamma)$ is one of the almost simple primitive groups in part (i).  
\end{itemize}
\end{thm}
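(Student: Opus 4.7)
The plan is to use the O'Nan--Scott theorem to reduce to a case-by-case analysis on the socle type of $G$, and then to combine classical bounds on fixed point ratios with the detailed theory of elements of prime order. First I would observe that a finite primitive $G \leqs \sym(\Omega)$ is either affine, almost simple, of diagonal type, of product type, or of twisted wreath type. Product type reduces, via the product action on $\Gamma^t$, to the almost simple case for the wreathed factor $A \leqs \sym(\Gamma)$, producing exactly case (iii). For diagonal and twisted wreath type one checks directly, using the structure of the socle $T^k$ and of its stabilizer, that every element of prime order has fixed point ratio bounded by $1/(p+1)$, so these types contribute no exceptional families.

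For the affine case one has $G \leqs \AGL_d(p)$ with socle $V = \FF_p^d$ and point stabilizer $G_0 \leqs \GL_d(p)$ acting irreducibly on $V$. For a prime-order element $z$,
\[
{\rm fpr}(z,\Omega) = \frac{|C_V(z)|}{|V|},
\]
where $C_V(z)$ is computed via the image of $z$ in $G_0$ when $z \notin V$. The bound $1/(p+1)$ fails only if the fixed space has codimension at most $1$, and a short analysis (splitting into the semisimple and unipotent cases and exploiting the irreducibility of $G_0$) shows the only possibility is $q = p$ with $z$ acting as a transvection, giving the exact value $1/p$ in (ii).

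The main obstacle is the almost simple case. Here $T \normeq G \leqs \Aut(T)$ for a nonabelian simple group $T$, and one must bound
\[
{\rm fpr}(z,\Omega) = \frac{|z^G \cap H|}{|z^G|}
\]
for every element $z$ of prime order, where $H$ is a maximal subgroup of $G$. I would run through the families of simple groups in turn. For $T = A_n$ acting on $k$-subsets a direct combinatorial computation expresses ${\rm fpr}(z,\Omega)$ in terms of the cycle type of $z$; a transposition gives fixed point ratio $k/n$, exceeding $1/3$ whenever $n < 3k$, which produces case (i)(a). For sporadic groups one appeals to the character table library for a finite check. For classical and exceptional groups of Lie type, the strategy combines Liebeck--Saxl--Shalev type lower bounds on $|z^G|$ with upper bounds on $|z^G \cap H|$ derived from the Aschbacher structural classification of maximal subgroups, supplemented by ad hoc computer calculations in low rank and small characteristic; any triple $(G,H,z)$ failing the target bound $1/(p+1)$ is added to the finite exceptional list in (i)(b). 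The genuine difficulty here is sharpness: obtaining precisely the constant $1/(p+1)$, rather than some weaker absolute bound, and pinning down the complete list of equality cases, demands the most delicate refinements of the Lie-theoretic fixed-point estimates.
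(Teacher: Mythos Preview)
The paper does not prove Theorem~\ref{t:bg}: it is quoted, without proof, as ``a simplified version of the main theorem of \cite{BG}'' and is used as a black box throughout Section~\ref{s:thmcd}. So there is no proof in this paper to compare your proposal against. What you have sketched is, in broad outline, a plausible plan for the argument that presumably appears in \cite{BG} itself --- an O'Nan--Scott reduction followed by a detailed almost-simple analysis --- but that is a substantial separate paper, and the present manuscript makes no attempt to reproduce it.

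If your intention was to supply a self-contained proof of Theorem~\ref{t:bg} here, be aware that your sketch vastly understates the work involved. The almost-simple case is not a matter of ``running through the families'' with existing Liebeck--Saxl--Shalev bounds: getting the sharp constant $1/(p+1)$ and the \emph{complete} list of exceptions in (i)(b) (including the explicit tables in \cite{BG} that the present paper appeals to, e.g.\ \cite[Table~1]{BG} and \cite[Table~6]{BG}) requires a lengthy case analysis across all Aschbacher classes for each classical family, plus separate treatment of the exceptional groups. Your outline names the right ingredients but does not engage with any of the actual difficulties, so as a proof it is only a table of contents.
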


We will also need the following corollary to Theorem \ref{t:bg} in the almost simple setting (see \cite[Corollary 3]{BG}). Recall that the \emph{socle} of an almost simple group $G$ is its unique minimal normal subgroup, which coincides with $\bF^*(G)$.

\begin{cor}\label{c:as}
Let $G \leqs {\rm Sym}(\Omega)$ be a finite almost simple primitive permutation group with socle $J$. If $z \in G$ has prime order $p$, then either
\[
{\rm fpr}(z,\Omega) \leqs \frac{1}{p},
\]
or one of the following holds (up to permutation isomorphism):
\begin{itemize}\addtolength{\itemsep}{0.2\baselineskip}
\item[{\rm (i)}] $J = A_n$ and $\Omega$ is the set of $k$-element subsets of $\{1, \ldots, n\}$ for some $1 \leqs k < n/2$; 
\item[{\rm (ii)}] $(J,p) = ({\rm PSL}_2(q), q-1)$, $({\rm Sp}_6(2),3)$, $({\rm PSU}_4(2),2)$, $({\rm Sp}_n(2),2)$ or $(\Omega_{n}^{\epsilon}(2),2)$.
\end{itemize}
\end{cor}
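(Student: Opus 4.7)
The plan is to derive Corollary \ref{c:as} directly from Theorem \ref{t:bg} by restricting to the almost simple setting and then upgrading the bound $1/(p+1)$ to $1/p$, which costs us only a small additional family of exceptions.

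First I would observe that since $G$ is almost simple, case (ii) of Theorem \ref{t:bg} (affine groups with $\bF^*(G) = (C_p)^d$) and case (iii) (product type groups properly embedded in a wreath product $A \wr S_t$ with $t \geqs 2$) cannot occur: in both situations $\bF^*(G)$ is either an elementary abelian $p$-group or a direct product of at least two copies of a nonabelian simple group, whereas an almost simple group has a simple socle. Hence, if $\mathrm{fpr}(z,\Omega) > 1/(p+1)$ (which is the only situation we must analyse, since otherwise $1/(p+1) < 1/p$ and we are done), then we are forced into case (i) of Theorem \ref{t:bg}.

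Next I would split case (i) into the two explicit subcases. In subcase (a), $G = S_n$ or $A_n$ acting on the set of $k$-element subsets of $\{1,\ldots,n\}$, which is exactly conclusion (i) of Corollary \ref{c:as}, so there is nothing further to check. In subcase (b), the tuple $(G,H,z,\mathrm{fpr}(z,\Omega))$ is on the explicit finite list provided by \cite{BG}. Here I would go through that list and retain precisely those entries for which $\mathrm{fpr}(z,\Omega) > 1/p$; the claim is that these are exactly the cases enumerated in (ii) of the corollary, namely $(J,p) = (\mathrm{PSL}_2(q), q-1)$, $(\mathrm{Sp}_6(2), 3)$, $(\mathrm{PSU}_4(2), 2)$, $(\mathrm{Sp}_n(2), 2)$, and $(\Omega_n^\epsilon(2), 2)$.

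The main obstacle is precisely this last bookkeeping step: one has to inspect the (rather long) list of exceptional tuples recorded in \cite{BG} and verify, in each instance, whether the tabulated fixed point ratio exceeds $1/p$ or not. This is not conceptually deep, but it requires care because several of the listed exceptions satisfy $1/(p+1) < \mathrm{fpr}(z,\Omega) \leqs 1/p$ and therefore drop out of Corollary \ref{c:as}, while only the families above survive. Once this case-by-case verification is done, no further argument is needed, since the bound $\mathrm{fpr}(z,\Omega) = |z^G \cap H|/|z^G|$ is read off directly from the transitive action on $\Omega$ and the list in \cite{BG} is exhaustive by the hypothesis that $G$ is primitive.
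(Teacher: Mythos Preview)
Your proposal is correct and is precisely the natural derivation of the corollary from Theorem~\ref{t:bg}. Note that the paper itself does not supply a proof of Corollary~\ref{c:as}: it simply quotes it as \cite[Corollary~3]{BG}, so your sketch is in fact more detailed than what appears here, and the inspection of the exceptional list you describe is exactly the verification carried out in \cite{BG} to obtain that corollary.
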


We will now use Theorem \ref{t:bg} and Corollary \ref{c:as} to handle two more special cases of Theorem C, which will then be applied to obtain the result in full generality. In the following proposition, the \emph{components} of $K$ are the quasisimple groups referred to in the statement.
 
\begin{prop}\label{p:perm1}
Let $K$ be a central product of quasisimple groups with $\bO_p(K)=1$ and let $x,y \in {\rm Aut}(K)$ be nontrivial $p$-elements such that $x$ does not normalize any component of $K$.  Assume that the simple quotients of the components of $K$ are isomorphic.  Then the proportion of elements in $y^K$ which commute with $x$ is at most $1/(p+1)$.
\end{prop}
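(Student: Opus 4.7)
The plan is to identify the desired proportion with a fixed point ratio in a primitive permutation action and then apply Theorem \ref{t:bg}. Set $G = \langle K, x, y\rangle$ inside the natural overgroup $\Aut(K) \cdot K$. Since $\bO_p(K) = 1$, any normal $p$-subgroup of $K$ is trivial; in particular, $\bZ(K)$ is a $p'$-group, and one reduces (by a variant of Lemma \ref{quop'} applied to the $K$-conjugation action on $p$-elements of $\Aut(K)$) to the case where $K = S^t$ is the direct product of copies of a common nonabelian simple group $S$. Since $x$ has $p$-power order and normalizes no component, it induces a fixed-point-free $p$-element on the set of components, all of whose orbits have $p$-power size $\geqs p$; in particular $t \geqs p$.

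If $y^K \cap \cent{G}{x} = \varnothing$ the bound is immediate, so after conjugating $y$ by a suitable element of $K$ (which does not alter the set $y^K$) we may assume $[x, y] = 1$. Then the desired proportion is the fixed point ratio ${\rm fpr}(x, y^K)$ of $x$ on the transitive $G$-set $y^K \cong G/\cent{G}{y}$. I would choose a maximal subgroup $M \leqs G$ containing $\cent{G}{y}$ but not containing $K$; such an $M$ exists because $y$ does not centralize all of $K$. The equivariant surjection $y^K \twoheadrightarrow G/M$ yields ${\rm fpr}(x, y^K) \leqs {\rm fpr}(x, G/M)$, and $G$ acts primitively on $G/M$.

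To bring Theorem \ref{t:bg} to bear, I replace $x$ by $x_0 = x^{|x|/p}$, its unique power of order $p$; then ${\rm fpr}(x, G/M) \leqs {\rm fpr}(x_0, G/M)$. Crucially $x_0$ still permutes components nontrivially, since $x$ has some orbit of length $|x|$ on components and $x_0$ acts as a $p$-cycle on such an orbit. Theorem \ref{t:bg} applied to the primitive action of $G$ on $G/M$ with $z = x_0$ now gives ${\rm fpr}(x_0, G/M) \leqs 1/(p+1)$, because none of the exceptional cases occur: (i) fails since the socle of $G$ is $K = S^t$ with $t \geqs p \geqs 2$ components; (ii) fails since the socle is nonabelian; and (iii) would require $x_0 \in A^t$, contradicting that $x_0$ moves factors.

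The main obstacle is the careful verification that the primitive action is nondegenerate. If $M$ were to contain $K$, then the effective quotient $G/\mathrm{Core}_G(M)$ would be an abelian $p$-group (as $G/K$ is generated by the commuting $p$-elements $xK, yK$), and the fixed point ratio of $x_0$ could be as large as $1$, which would break the argument. Hence the importance of choosing $M$ with $K \not\leqs M$. Establishing the existence of such an $M$ and ensuring $x_0 \notin \mathrm{Core}_G(M)$, so that Theorem \ref{t:bg} applies meaningfully, requires a careful case analysis exploiting the hypothesis that $x$ moves components; this is the technical crux. In the tightest configuration ($t = p$, with $x$ a $p$-cycle and $y$ acting trivially on factors) one can fall back on a direct count showing the proportion equals $1/|C|^{p-1}$ for some nontrivial $S$-conjugacy class $C$ of size $\geqs 3$, which is bounded by $1/(p+1)$ because $S$ has no subgroup of index $2$.
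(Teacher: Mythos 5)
Your overall strategy coincides with the paper's: pass to $J=K/\bZ(K)\cong L^t$, assume $[x,y]=1$, identify the desired proportion with a fixed point ratio of $x$ on $G/M$ for a maximal subgroup $M$ of $G=\langle J,x,y\rangle$ containing $\cent Gy$, and invoke Theorem \ref{t:bg}, excluding the exceptional cases because $x$ permutes the components nontrivially. (The paper phrases the first step as the symmetry $|y^J\cap \cent Gx|/|y^J| = |x^J\cap \cent Gy|/|x^J|$, which is exactly your identification of $y^K$ with $G/\cent Gy$.) However, the two points you defer as the ``technical crux'' are in fact immediate and do not need a case analysis: since $x,y\in \cent Gy$ we have $G=J\cent Gy$, so no proper subgroup containing $\cent Gy$ can contain $J$; and the core of $M$ is trivial because $\bC_G(J)=1$ forces every minimal normal subgroup of $G$ to lie inside $J$, and $J$ is the unique minimal normal subgroup once one reduces (by factoring the proportion over the $\langle x,y\rangle$-orbits on components) to $\langle x,y\rangle$ transitive on the components. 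Consequently your closing ``fallback'' count is not needed, and as written it is too vague to serve as a proof in any case.

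The one step that genuinely fails is your reduction to order $p$. You assert that $x_0=x^{|x|/p}$ still moves components ``since $x$ has some orbit of length $|x|$ on components.'' This is false: the permutation induced by $x$ on the set of components can have order strictly smaller than $o(x)$. For instance, in $S\wr C_p$ take $x=\sigma(g,1,\ldots,1)$ with $\sigma$ a $p$-cycle and $g\in S$ of order $p$; then $x$ has order $p^2$ and normalizes no component, but $x^p$ is a nontrivial element of $S^p$ normalizing every component. In that situation $x_0\in \Aut(L)^t\cap G$, so case (iii) of Theorem \ref{t:bg} is no longer excluded and your bound ${\rm fpr}(x_0,G/M)\leqs 1/(p+1)$ is unjustified. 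The paper's proof simply opens with ``we may assume $x$ has order $p$,'' and in the only place the proposition is applied (case (a) of the proof of Theorem C) $x$ does have order $p$ with all orbits of size $p$ on the components; but the specific argument you offer for the reduction does not work, and you would need either to restrict to $o(x)=p$ or to supply a correct reduction that preserves the hypothesis that no component is normalized.
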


\begin{proof}  
We may assume that $[x,y]=1$ and $x$ has order $p$. Let $K_1, \ldots, K_t$ be the components of $K$ and set $L_i = K_i/\bZ(K_i) \cong L$.   Note that $t$ is a multiple of $p$ since $x$ acts fixed point freely on the set of components. We can now view $x$ and $y$ as commuting automorphisms of the direct product $J:=L^t$, with $o(x) = p$ and $o(y) = p^m$ for some $m \geqs 1$. Set $G = \langle J, x, y \rangle \leqs {\rm Aut}(J)$ and note that $J$ is the unique minimal normal subgroup of $G$. Now
\[
\frac{|y^J \cap \cent Gx|}{|y^J|} = \frac{|x^J \cap \cent Gy|}{|x^J|}
\]
and it suffices to show that 
\begin{equation}\label{e:bound}
\frac{|x^J \cap \cent Gy|}{|x^J|}  \leqs \frac{1}{p+1}.
\end{equation}

Let $M$ be a maximal subgroup of $G$ containing $\cent Gy$ and observe that 
$M$ does not contain $J$ since $G = J\cent Gy$. This allows us to view $G$ acting primitively on the set of cosets $\Omega = G/M$ and we note that
\[
\frac{|x^J \cap \cent Gy|}{|x^J|} \leqs \frac{|x^G \cap M|}{|x^G|} = {\rm fpr}(x,\Omega).
\]
Then by applying Theorem \ref{t:bg}, noting that $x \not\in {\rm Aut}(L)^t \cap G$ by hypothesis, it follows that ${\rm fpr}(x,\Omega) \leqs 1/(p+1)$ and thus \eqref{e:bound} holds.
\end{proof}

Next we seek a version of Proposition \ref{p:perm1} in the special case where $K$ is quasisimple (see Propositions \ref{p:lie} and \ref{p:sym}). In order to do this, we will need the following elementary result.

\begin{lem}\label{l:classes} 
Let $G$ be a finite group, let $x \in G \setminus \bO_p(G)$ be a $p$-element and set 
\[
D = \{y \in G \,:\, \mbox{$\langle y \rangle$ is $G$-conjugate to $\langle x \rangle$}\}.
\]
Then $|D| \geqs p^2-1$.
\end{lem}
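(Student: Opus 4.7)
My plan is to parametrize $D$ via the subgroup $H := \langle x\rangle$. Writing $|H| = p^n$ for some $n \geqs 1$, every subgroup $G$-conjugate to $H$ is cyclic of order $p^n$ and contributes exactly $\varphi(p^n) = p^{n-1}(p-1)$ elements to $D$, and distinct conjugates contribute disjoint sets of generators. Hence
\[
|D| = |G : N_G(H)| \cdot p^{n-1}(p-1).
\]
The hypothesis $x \notin \bO_p(G)$ means that the $p$-subgroup $H$ is not normal in $G$ (otherwise $H \leqs \bO_p(G)$), so $|G:N_G(H)| \geqs 2$.

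For $n \geqs 2$ the conclusion is immediate: $|D| \geqs 2p(p-1) \geqs (p+1)(p-1) = p^2 - 1$, using $(p-1)^2 \geqs 0$. So the real work is in the case $n = 1$, where I need to upgrade $|G:N_G(H)| \geqs 2$ to $|G:N_G(H)| \geqs p+1$. I would argue by contradiction: set $m := |G:N_G(H)|$, assume $2 \leqs m \leqs p$, and let $G$ act by conjugation on the set $\Omega$ of the $m$ conjugates of $H$, with kernel $K$. The target is to show that the normal closure $L := \langle H^g : g \in G\rangle$ is a $p$-group; together with $L \normeq G$, this forces $L \leqs \bO_p(G)$, contradicting $H \leqs L$ and $H \not\leqs \bO_p(G)$.

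If $m \leqs p-1$, then $|G/K|$ divides $m!$ and so is coprime to $p$; thus $K$ contains a Sylow $p$-subgroup of $G$ and, being normal, contains all of them. In particular $H \leqs K$ and every $H^g$ is normal in $K$. Two distinct normal subgroups of $K$ of order $p$ meet trivially and hence centralize each other, so the conjugates of $H$ commute pairwise and $L$ is elementary abelian of exponent $p$, as required.

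The main obstacle is the boundary case $m = p$, where $K$ need not contain a Sylow subgroup of $G$. Here I would pick a Sylow $p$-subgroup $P$ containing $H$ and let it act on $\Omega$: if $H$ were normal in $P$ then $P \leqs N_G(H)$ and $p \nmid m$, contradicting $m = p$. So the $P$-orbit of $H$ in $\Omega$ has size a $p$-power strictly greater than $1$, hence at least $p$, hence equal to $\Omega$. Thus every $G$-conjugate of $H$ is already a $P$-conjugate and lies inside $P$, whence $L \leqs P$ is a $p$-group. This closes the last subcase and completes the argument.
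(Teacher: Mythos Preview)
Your argument is correct. The formula $|D| = |G:N_G(H)|\cdot\varphi(p^n)$ is right, the $n\geqs 2$ case is immediate, and in the $n=1$ case your two-part analysis ($m\leqs p-1$ via the kernel of the conjugation action, $m=p$ via a Sylow orbit argument) cleanly forces the normal closure $L$ into $\bO_p(G)$, giving the contradiction.

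Your route is genuinely different from the paper's. The paper first reduces to $\bO_p(G)=1$ and $o(x)=p$, then lets $x$ act on the set $C$ of conjugates of $\langle x\rangle$; since $x$ fixes $\langle x\rangle$, either $x$ has a nontrivial orbit of size $\geqs p$ (so $|C|\geqs p+1$ and $|D|=(p-1)|C|\geqs p^2-1$), or $x$ normalizes every $\langle x\rangle^g$, whence each $\langle x,x^g\rangle$ is a $p$-group and Baer's theorem forces $x\in\bO_p(G)$, a contradiction. So the paper trades your explicit case split for a single appeal to Baer's theorem. Your proof is more elementary in that it avoids Baer entirely, at the cost of some extra casework; the paper's proof is shorter and more uniform once one is willing to invoke Baer. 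Note in particular that your $m=p$ Sylow argument is a self-contained substitute for exactly the place where Baer would otherwise be needed.
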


\begin{proof}  
Without loss of generality, we may assume that $\bO_p(G)=1$ and $x$ has order $p$.
Consider the natural action of $G$ on the set $C$ of conjugates of $\langle x \rangle$ and note that $|D| = (p-1)|C|$, so it suffices to show that $|C| \geqs p+1$. Note that $x$ fixes $\langle x \rangle \in C$, so it has at least one fixed point on $C$. If $x$ acts trivially on $C$, then $x$ centralizes each of its conjugates and thus, by Baer's theorem,  $x \in \bO_p(G)$, which is a contradiction. Therefore, $x$ acts nontrivially on $C$ and we conclude that $|C| \geqs p+1$.
\end{proof}

\begin{rem}
Let $G$, $D$ and $p$ be given as in Lemma \ref{l:classes}. Then $|D|=p^2-1$ if and only if $|G_p|=p^2$ and the groups with this property are determined in Lemma \ref{cyclicsylow}.
\end{rem}

We also need the following result, which is a corollary of Theorem \ref{t:bg}.

\begin{lem}\label{l:fpr}
Let $G$ be an almost simple group with socle $J$ and assume $J$ is not isomorphic to an alternating group. Let $p$ be a prime divisor of $|J|$ and suppose $x \in G$ has order $p$. Then there exists an element $y \in J$ of order $p$ such that  
\[
\frac{|y^G \cap \cent Gx|}{|y^G|} \leqs \frac{1}{p+1}.
\]
\end{lem}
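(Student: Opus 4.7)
The plan is to exploit the symmetric identity
\[
\frac{|y^G \cap C_G(x)|}{|y^G|} = \frac{|x^G \cap C_G(y)|}{|x^G|},
\]
which follows by a standard double-count of commuting pairs $(a,b) \in x^G \times y^G$. It therefore suffices to find a $p$-element $y \in J$ for which the right-hand side is at most $1/(p+1)$. Since $G$ is almost simple we have $\bZ(G)=1$, so for any nontrivial $y$ the centralizer $C_G(y)$ is a proper subgroup of $G$ and hence lies in some maximal subgroup $M$ of $G$. Then
\[
\frac{|x^G \cap C_G(y)|}{|x^G|} \leqs \frac{|x^G \cap M|}{|x^G|} = \mathrm{fpr}(x,\Omega),
\]
where $\Omega := G/M$ carries a primitive $G$-action. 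This reduces the lemma to estimating a fixed point ratio in a primitive action, to which Theorem \ref{t:bg} applies directly.

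Because the socle of $G$ is simple, any primitive action of $G$ falls under case (i) of Theorem \ref{t:bg}. Case (i)(a) — the subset actions of $\mathrm{S}_n$ or $\mathrm{A}_n$ — is excluded by the hypothesis that $J$ is not alternating. Consequently, for any maximal $M\geqs C_G(y)$, either $\mathrm{fpr}(x,\Omega)\leqs 1/(p+1)$ and we are done, or $(G,M,x)$ appears in the explicit exceptional list of (i)(b). Thus the task reduces to exhibiting, for each almost simple $G$ with non-alternating socle $J$ and each prime $p$ dividing $|J|$, a $p$-element $y \in J$ together with some maximal overgroup of $C_G(y)$ whose associated action is not on this short list.

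To do this, I would choose $y$ to be a suitably generic $p$-element whose centralizer is forced into a ``non-geometric'' maximal subgroup. Concretely, when $J$ is of Lie type and $p$ is coprime to the defining characteristic, a regular semisimple $p$-element is typically available and has centralizer inside the normalizer of a maximal torus, which is not one of the exceptional actions in (i)(b). When $p$ equals the defining characteristic, one can take a suitable (non-minimal) unipotent $p$-element whose centralizer sits in a non-exceptional parabolic. For sporadic socles, the claim is a finite verification against the \textsc{Atlas} of maximal subgroups and classes. The exceptional list in (i)(b) is finite and explicitly known, so the argument is a case-by-case check using the class and subgroup structure of $G$.

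The main obstacle is this case analysis, which is most delicate for small-rank classical groups over small fields and for small primes $p$: here the exceptional list in (i)(b) is largest, the number of $G$-classes of $p$-elements in $J$ is most restricted, and a generic $y$ in the sense above may not exist, forcing an ad hoc treatment via a direct computation of $|x^G\cap C_G(y)|/|x^G|$ for some specific $y$. Granting this case-by-case verification, combining the identity above with Theorem \ref{t:bg} yields the desired bound.
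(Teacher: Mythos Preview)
Your strategy is essentially the paper's, with one dual twist. The paper does \emph{not} invoke the symmetric identity: it embeds $C_G(x)$ (not $C_G(y)$) in a core-free maximal subgroup $H$ and bounds $|y^G\cap C_G(x)|/|y^G|\leqs {\rm fpr}(y,G/H)$ directly. Thus in the paper the action $G/H$ is fixed once and for all by $x$, and the freedom lies in the choice of the element $y$ to which Theorem~\ref{t:bg} is applied; in your version, the element $x$ is fixed in Theorem~\ref{t:bg} and the action $G/M$ varies with $y$.

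Both routes reduce to the same exceptional list, but the paper's organization makes the residual case analysis shorter: if \emph{every} $y$ of order $p$ in $J$ violated the bound, then $(G,H)$ would have to be one of the explicitly listed pairs (a subspace action of a classical group, or the single sporadic case ${\rm M}_{22}{:}2$ with $p=2$), and for each such pair one exhibits a concrete $y$ that works --- occasionally by a direct count rather than via ${\rm fpr}(y,G/H)$, e.g.\ $J=\mathrm{PSL}_2(p)$ with $x,y$ regular unipotent, or $J=\mathrm{PSL}_3(2)$ with $p=2$. Your heuristic of picking $y$ so that $C_G(y)$ sits in a torus normalizer or a ``non-exceptional'' parabolic is reasonable but not always available (for instance in $\mathrm{PSL}_2(p)$ with $p$ the characteristic there is only one class of $p$-elements and the only maximal overgroup of $C_G(y)$ is the Borel $P_1$), so you would still be forced into the same kind of ad hoc computations you flag at the end. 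In short: your proposal is correct, but the paper's choice to embed $C_G(x)$ rather than $C_G(y)$ buys a tighter case analysis, since the exceptional list is indexed by $(G,H)$ rather than by conjugacy classes of $x$.
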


\begin{proof}
We may assume $G = \langle J,x\rangle$ and we may embed $\cent Gx$ in a core-free maximal subgroup $H$ of $G$, so
\[
\frac{|y^G \cap \cent Gx|}{|y^G|} \leqs \frac{|y^G \cap H|}{|y^G|} = {\rm fpr}(y,G/H)
\]
for every element $y \in J$ of order $p$. Clearly, the desired conclusion holds if there exists such an element with ${\rm fpr}(y,G/H) \leqs  1/(p+1)$, so we may assume otherwise, in which case $(G,H,y)$ is one of the special cases arising in part (i)(b) of Theorem \ref{t:bg}. More precisely, \cite[Theorem 1]{BG} implies that either $G$ is a classical group in a subspace action (and the special cases that arise are recorded in \cite[Table 6]{BG}), or $G = {\rm M}_{22}{:}2$, $H = {\rm PSL}_3(4).2_2$ and $p=2$. In the latter case one can check that ${\rm fpr}(y,G/H)  = 3/11$ if $y \in J$ is an involution, so we may assume $G$ is a classical group in a subspace action. We now inspect the cases in \cite[Table 6]{BG}.

If $J$ is a unitary, symplectic or orthogonal group, then it is easy to check that in every case $(G,H)$ there exists an element $y \in J$ of order $p$ such that ${\rm fpr}(y,G/H) \leqs 1/(p+1)$. For example, if $J = {\rm PSp}_n(q)$ with $n \geqs 4$, $H=P_1$ is the stabilizer of a $1$-space and $p=q$, then we can take $y = (J_2^2,J_1^{n-4})$, where $J_i$ denotes a standard unipotent Jordan block of size $i$. 

To complete the proof, let us assume $J = {\rm PSL}_n(q)$ is a linear group and note that $H=P_1$ is the stabilizer of a $1$-space. If $n \geqs 4$ then once again it is straightforward to see that there is an element $y \in J$ of order $p$ with ${\rm fpr}(y,G/H) \leqs 1/(p+1)$, so we may assume $n \in \{2,3\}$. Suppose $n=3$. If $p=q \geqs 3$ then we can choose $y = (J_3)$, while for $q=2$ we must take $y = (J_2,J_1)$ and one can use \textsf{GAP} \cite{GAP} to verify the desired bound in the statement of the lemma. Similarly, if $p=q-1 \geqs 3$ then we can take $y$ to be the image (modulo scalars) of a diagonal matrix $(\omega, \omega^{-1},I_1)$, where $\omega \in \mathbb{F}_q^{\times}$ has order $p$. And if $(q,p)=(3,2)$ then $y = (-I_2,I_1)$ is the only option and the result can be checked using \textsf{GAP}. 

Finally, suppose $J = {\rm PSL}_2(q)$, so $q \geqs 7$ since ${\rm PSL}_2(4)$ and ${\rm PSL}_2(5)$ are both isomorphic to $A_5$. If $p=q-1$ then $|y^G| = q(q+1)$ and $|\cent Gx|<q$, so the desired bound holds. Now assume $q=p$. Here both $x$ and $y$ are regular unipotent elements and we compute $|y^G| = (p^2-1)/2$ and $|y^G \cap \cent Gx| = (p-1)/2$, which implies that
\[
\frac{|y^G \cap \cent Gx|}{|y^G|} = \frac{1}{p+1}.
\]
The result follows.
\end{proof}

\begin{prop} \label{p:lie}  
Let $K$ be a quasisimple group such that $\bO_p(K)=1$ and $K/\bZ(K)$ is not isomorphic to an alternating group. Let $x \in {\rm Aut}(K)$ be a nontrivial $p$-element.
\begin{itemize}\addtolength{\itemsep}{0.2\baselineskip}
\item[{\rm (i)}] There is a normal subset $D$ of nontrivial $p$-elements in $K$ such that $|D| \geqs p^2-1$ and the proportion of elements in $D$ which commute with $x$ is at most $1/(p+1)$.
\item[{\rm (ii)}] Let $y \in {\rm Aut}(K)$ be a nontrivial $p$-element. 

\vspace{1mm}

\begin{itemize}\addtolength{\itemsep}{0.2\baselineskip}
\item[{\rm (a)}] The proportion of elements in $y^K$ which commute with $x$ is at most $1/p$, unless $K=\Omega_{n}^{+}(2)$, $n \geqs 8$, $p=2$ and both $x$ and $y$ are transvections, in which case the proportion is $1/2+1/2(2^{\frac{n}{2}}-1)$.
\item[{\rm (b)}] The proportion of elements in $(Ky)_p$ which commute with $x$ is at most $1/p$.    
\end{itemize}
\end{itemize} 
\end{prop}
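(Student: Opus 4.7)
My plan is to reduce both parts to the almost simple setting via the quotient $\bar K = K/\bZ(K)$ and then invoke Lemma~\ref{l:fpr} and Corollary~\ref{c:as}. The reduction is clean for two reasons: $\bO_p(K)=1$ forces $p\nmid|\bZ(K)|$, so every order-$p$ element of $\bar K$ lifts uniquely to an order-$p$ element of $K$; and $K$ is perfect, so $\Aut(K)\hookrightarrow\Aut(\bar K)$ is injective and commutation is preserved under this map. In both parts I may replace $x$ by a power of order exactly $p$, since this only shrinks $\cent{\Aut(K)}{x}$ and so preserves any upper bound on the commuting proportion. I then work inside the almost simple group $\bar G'=\langle\bar K,\bar x,\bar y\rangle\leqs\Aut(\bar K)$, whose socle $\bar K$ is non-alternating by hypothesis.

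For part~(i), Lemma~\ref{l:fpr} applied to $\bar G=\langle\bar K,\bar x\rangle$ yields $\bar y\in\bar K$ of order $p$ with $|\bar y^{\bar G}\cap\cent{\bar G}{\bar x}|/|\bar y^{\bar G}|\leqs 1/(p+1)$. Since $\bar y^{\bar G}$ is a disjoint union of $\bar K$-classes of equal size, a pigeonhole replacement of $\bar y$ by a suitable $\bar G$-conjugate gives the same bound on $\bar y^{\bar K}$. I then take $\bar D$ to be the set of generators of all $\bar K$-conjugates of $\langle\bar y\rangle$; Lemma~\ref{l:classes} applied in $\bar K$ gives $|\bar D|\geqs p^2-1$. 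The key observation is that $\cent{\bar K}{\bar y^i}=\cent{\bar K}{\bar y}$ for $1\leqs i\leqs p-1$ and the bijection $\bar z\mapsto\bar z^i$ between $\bar y^{\bar K}$ and $(\bar y^i)^{\bar K}$ respects commutation with $\bar x$, so every $\bar K$-class inside $\bar D$ has the same proportion (namely $\leqs 1/(p+1)$) commuting with $\bar x$. Lifting $\bar D$ to $D\subset K$ via the unique order-$p$ lift yields the required normal subset.

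For part~(ii)(a), if no $K$-conjugate of $y$ commutes with $x$ the ratio is zero; otherwise I replace $y$ by a commuting conjugate, reducing to $[x,y]=1$. The standard commuting-pair identity $|y^K\cap\cent{G'}{x}|/|y^K|=|x^K\cap\cent{G'}{y}|/|x^K|$, combined with the consequence $\bar G'=\bar K\cent{\bar G'}{\bar x}$ of $[\bar x,\bar y]=1$, forces $\bar x^{\bar K}=\bar x^{\bar G'}$. Embedding $\cent{\bar G'}{\bar y}$ in a maximal subgroup $\bar M$ of $\bar G'$ not containing $\bar K$ (possible since the nontrivial automorphism $\bar y$ cannot be centralized by $\bar K$), the target ratio is bounded above by ${\rm fpr}(\bar x,\bar G'/\bar M)=|\bar x^{\bar G'}\cap\bar M|/|\bar x^{\bar G'}|$, which Corollary~\ref{c:as} bounds by $1/p$ unless $(\bar K,p)\in\{(\PSL_2(q),q-1),(\Sp_6(2),3),(\PSU_4(2),2),(\Sp_n(2),2),(\Omega_n^\epsilon(2),2)\}$. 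Each residual case is then handled by direct centralizer arithmetic, using \textsf{GAP} for the small sporadic-type cases and the explicit conjugacy-class structure of $\PSL_2(q)$ for that family: the bound $1/p$ persists in all but the case $\bar K=\Omega_n^+(2)$ with $\bar x,\bar y$ both transvections, where $|\cent{\bar K}{\bar x}|=2|\Sp_{n-2}(2)|$ and a short count of commuting transvections produces the stated value $\frac{1}{2}+\frac{1}{2(2^{n/2}-1)}$. Part~(ii)(b) then follows by partitioning $(Ky)_p$ into its $K$-conjugacy classes and applying~(ii)(a) class-by-class, with a further coset-wide arithmetic check needed only when one of the classes consists of transvections in $\Omega_n^+(2)$.

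I expect the main obstacle to be the case-by-case analysis of the Corollary~\ref{c:as} exceptions in part~(ii)(a)---especially the classical families $(\Sp_n(2),2)$ and $(\Omega_n^\epsilon(2),2)$ at general $n$, where one must verify the bound $1/p$ for every pair of $2$-elements and pinpoint the transvection-transvection configuration in $\Omega_n^+(2)$ as the unique genuine exception. Once this is settled the remainder, including~(ii)(b), is bookkeeping.
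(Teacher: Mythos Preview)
Your overall strategy---pass to the almost simple quotient, deduce (i) from Lemma~\ref{l:fpr} together with Lemma~\ref{l:classes}, and deduce (ii)(a) by swapping $x$ and $y$ in the commuting-pair identity, embedding $\cent{\bar G'}{\bar y}$ in a core-free maximal subgroup, and applying Corollary~\ref{c:as}---is exactly the paper's. Two points deserve attention.

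First, the exceptional families in Corollary~\ref{c:as}(ii). You propose ``direct centralizer arithmetic'' for $(\Sp_n(2),2)$ and $(\Omega_n^{\epsilon}(2),2)$ and correctly flag this as the main obstacle, but verifying the $1/p$ bound for every pair of $2$-element classes across an infinite family is not a proof as written. The paper avoids this entirely by a re-embedding trick. The exceptions to Corollary~\ref{c:as} are tied to specific primitive actions $(G,H)$, and by inspecting \cite[Table~1]{BG} one checks that for each exceptional triple $(G,H,z)$ either $\cent Gz$ lies in a \emph{different} core-free maximal subgroup $M$ with ${\rm fpr}(z',G/M)\leqs 1/p$ for every $z'\in G$ of order $p$, or we are in the single configuration $G=\OO_n^+(2)$, $H$ the stabilizer of a nonsingular $1$-space, $z$ a transvection. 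So if ${\rm fpr}(x,G/H)>1/p$ then $x$ is one of the tabulated $z$'s; outside the $\Omega_n^+(2)$ case one reverts to the un-flipped form $|y^J\cap\cent Gx|/|y^J|$, embeds $\cent Gx$ in $M$, and bounds by ${\rm fpr}(y,G/M)\leqs{\rm fpr}(y^{p^{a-1}},G/M)\leqs 1/p$. This single table-lookup replaces the open-ended case analysis you anticipate.

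Second, a gap in (ii)(b). Your class-by-class partition of $(Ky)_p$ breaks down when $y$ is inner, i.e.\ $Ky=K$: one of the $K$-classes is then $\{1\}$, for which (ii)(a) says nothing. The paper patches this by grouping the identity with the normal set $D$ from part~(i). Since $|D|\geqs p^2-1$ and at most a $1/(p+1)$ fraction of $D$ commutes with $x$, one gets
\[
\frac{1+|D\cap\cent Kx|}{1+|D|}\ \leqs\ \frac{1}{p+1}+\frac{p}{(p+1)\,p^2}\ =\ \frac{1}{p},
\]
and the remaining nontrivial $K$-classes are then handled by (ii)(a) as you say. This is precisely why the sharper bound $1/(p+1)$ and the estimate $|D|\geqs p^2-1$ are needed in part~(i); your write-up treats (i) as standalone, but its real role is to absorb the identity here.
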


\begin{proof}  
We may assume $x$ has order $p$. Set $J= K/\bZ(K)$ and view $x$ as an automorphism of $J$ of order $p$. Set $G = \langle J,x\rangle$. By Lemma \ref{l:fpr}, there exists an element $y \in J$ of order $p$ such that 
\begin{equation}\label{e:bds}
\frac{|y^J \cap \cent Gx|}{|y^J|} \leqs \frac{|y^G \cap \cent Gx|}{|y^G|} 
\leqs \frac{1}{p+1}.
\end{equation}
If we write $y$ for the corresponding element in $K$, then by applying Lemma \ref{l:classes} we deduce that the normal subset 
\[
D = \{z \in K \,:\, \mbox{$\langle z \rangle$ is $K$-conjugate to $\langle y \rangle$}\} = \bigcup_{i=1}^tz_i^K
\]
contains at least $p^2-1$ elements. Moreover, \eqref{e:bds} implies that the proportion of elements in $z_i^K$ which commute with $x$ is at most $1/(p+1)$ for $1 \leqs i \leqs t$ and thus part (i) follows.

Now let us turn to part (ii). We may assume $[x,y]=1$ and we may view $y$ as an automorphism of $J$ with $o(y) = p^a$ for some $a \geqs 1$. Set $G = \langle J, x,y \rangle \leqs {\rm Aut}(J)$ and embed $\cent Gy$ in a core-free maximal subgroup $H$ of $G$, which allows us to view $G$ as an almost simple primitive permutation group on $\Omega = G/H$.

For now, let us exclude the special cases $(J,p)$ in Corollary \ref{c:as}(ii). Then 
Corollary \ref{c:as} implies that 
\begin{equation}\label{e:cent}
\frac{|y^J \cap \cent Gx|}{|y^J|} = \frac{|x^J \cap \cent Gy|}{|x^J|} \leqs \frac{|x^G \cap H|}{|x^G|} = {\rm fpr}(x,G/H) \leqs \frac{1}{p}
\end{equation}
and thus the proportion of elements in $y^K$ which commute with $x$ is at most $1/p$. 

Next consider the coset $Ky$. Write $(Ky)_p = y_1^K \cup \cdots \cup y_r^K$ as a disjoint union of $K$-classes. If $Ky \ne K$ then each $y_i$ is a nontrivial $p$-element and so the proportion of elements in $y_i^K$ commuting with $x$ is at most $1/p$ by \eqref{e:cent} and the desired result follows. A very similar argument applies when $Ky = K$, but here we have to account for the identity element. To do this, write $K_p = \{1\} \cup D \cup z_1^K \cup \cdots \cup z_s^K$, where $D$ is the normal subset in (i) and each $z_i$ is nontrivial. Set $a_0 = |D \cap \cent Kx|+1$, $b_0 = |D|+1$, $a_i = |z_i^K \cap \cent Kx|$ and $b_i = |z_i^K|$ for $i \geqs 1$. As above, we have $a_i/b_i \leqs 1/p$ for $i \geqs 1$, so it suffices to show that $a_0/b_0 \leqs 1/p$. If we write $D = y_1^K \cup \cdots \cup y_t^K$, then $|y_i^K \cap \cent Kx|/|y_i^K| \leqs 1/(p+1)$ for each $i$ and thus 
\[
\frac{a_0}{b_0} \leqs \frac{1}{p+1}+\frac{p}{m(p+1)},
\]
where $m = |D|+1$. Since $m \geqs p^2$ we deduce that $a_0/b_0 \leqs 1/p$ and the result follows. 

To complete the proof of (ii), it remains to consider the special cases $(J,p)$ in Corollary \ref{c:as}(ii). In each of these cases, $G$ is an almost simple classical group in a subspace action with point stabilizer $H$ and there exists an element $z \in G$ of order $p$ with ${\rm fpr}(z,G/H)>1/p$. The possibilities for $(G,H,z)$ are recorded in \cite[Table 1]{BG}. By inspection, we observe that either
\begin{itemize}\addtolength{\itemsep}{0.2\baselineskip}
\item[{\rm (a)}] $\cent Gz$ is contained in a maximal subgroup $M$ of $G$ such that ${\rm fpr}(z',G/M) \leqs 1/p$ for all $z' \in G$ of order $p$; or
\item[{\rm (b)}] $G={\rm O}^+_n(2)$, $n \geqs 8$, $p=2$, $H$ is the stabilizer of a nonsingular $1$-space and $z = (J_2,J_1^{n-2})$.
\end{itemize}
So excluding the special case in (b), the previous argument goes through. In particular, the previous argument applies if $y \in K$ (note that in case (b), $z$ is contained in ${\rm O}_n^{+}(2) \setminus J$). 

We have now reduced to the case where $G = {\rm O}_n^{+}(2)$, $p=2$ and both $x$ and $y$ are transvections. Here $y^J = y^G$ and $\cent Gx = H$ is the stabilizer of a nonsingular $1$-space, so \cite[Theorem 1]{BG} gives
\[
\frac{|y^J \cap \cent Gx|}{|y^J|} = {\rm fpr}(y,G/H) = \frac{1}{2}+ \frac{1}{2(2^{\frac{n}{2}}-1)}
\]
for the proportion of elements in $y^K$ commuting with $x$. So this is an exception to the main bound in (ii)(a), but we still claim that the proportion of $2$-elements in $Ky$ commuting with $x$ is at most $1/2$. 

To see this, write $(Ky)_2 = y^K \cup y_1^K \cup \cdots \cup y_r^K$ as a disjoint union. By \cite[Theorem 1]{BG}, the proportion of elements in $y_i^K$ which commute with $x$ is at most $1/3$ for each $1 \leqs i \leqs r$. As a consequence, we deduce that the proportion of $2$-elements in $Ky$ commuting with $x$ is at most $1/2$ so long as 
\[
3.2^{\frac{n}{2}-1} = \frac{3|y^K|}{2^{\frac{n}{2}}-1} \leqs \sum_{i=1}^{r}|y_i^K|.
\]
But this inequality clearly holds since $|z^G| \geqs 2^{\frac{n}{2}-1}(2^{\frac{n}{2}}-1)$ for every nontrivial $2$-element $z \in G$. 
\end{proof} 

\begin{rem}
Let us observe that the upper bound in Proposition \ref{p:lie}(ii)(b) is best possible. For example, let $K = {\rm PSL}_2(p)$ and let $x$ and $y$ 
be inner automorphisms of $K$ of order $p$. Then $|(Ky)_p| = p^2$ and $|\cent Kx|=p$, so the relevant proportion is exactly $1/p$.
\end{rem}

We need a different result to handle alternating and symmetric groups.  

\begin{prop}\label{p:sym}  
Let $L = S_n$ and $J = A_n$, where $n \geqs 5$. Let $x \in L$ be an element of prime order $p$ and let $y \in L$ be a transposition.
\begin{itemize}\addtolength{\itemsep}{0.2\baselineskip}
\item[{\rm (i)}] If $p$ is odd, then the proportion of $p$-elements in $J$ which
commute with $x$ is at most $1/p$. 
\item[{\rm (ii)}]  If $p=2$ and $x$ is not a transposition, then the proportion of $2$-elements in $J$ or $Jy$ which commute with $x$ is at most $1/2$.
\item[{\rm (iii)}] If $p=2$ and $x$ is a transposition, then the proportion of $2$-elements in $L$ which commute with $x$ is at most $1/2$.  
\end{itemize}
\end{prop}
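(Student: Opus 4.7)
The approach is direct combinatorial analysis in $S_n$ based on the cycle type of $x$. For any $\sigma \in S_n$ with cycle type $\prod_\ell \ell^{a_\ell}$, the centralizer is $C_{S_n}(\sigma) = \prod_\ell (C_\ell \wr S_{a_\ell})$. Writing $s_p(r) = |(S_r)_p|$ for the number of $p$-elements of $S_r$, a key preliminary observation is that $|(C_p \wr S_k)_p| = p^k s_p(k)$: an element $((z_1,\ldots,z_k),\tau) \in C_p^k \rtimes S_k$ has $p$-power order iff $\tau \in (S_k)_p$, since $C_p^k$ is itself a $p$-group. Analogous counts hold for $C_{2^a} \wr S_k$ in the $p=2$ setting.

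For part (i), $x$ has cycle type $(p^k, 1^m)$ with $k \geqs 1$ and $m = n - kp$. Since $p$ is odd, every $p$-cycle is an even permutation, so $|J_p| = |L_p| = s_p(n)$ and (by a parity check on each factor of $C_L(x)$) every $p$-element of $C_L(x)$ lies in $J$, giving $|C_J(x)_p| = |C_L(x)_p| = p^k s_p(k) s_p(m)$. The bound $|C_J(x)_p|/|J_p| \leqs 1/p$ therefore reduces to the combinatorial inequality
\[
s_p(n) \geqs p^{k+1}\, s_p(k)\, s_p(m) \qquad (n = kp+m \geqs 5,\ k \geqs 1).
\]
I would prove this by induction on $k$, using the recurrence (obtained by conditioning on the cycle containing the point $1$)
\[
s_p(n) = \sum_{a \geqs 0} \binom{n-1}{p^a-1}(p^a - 1)!\, s_p(n-p^a) \geqs s_p(n-1) + (n-1)(n-2)\cdots(n-p+1)\, s_p(n-p).
\]
The base case $k=1$ reduces to $s_p(n) \geqs p^2 s_p(n-p)$, which (using $s_p(n-1) \geqs s_p(n-p)$) follows from $(n-1)(n-2)\cdots(n-p+1) \geqs p^2 - 1$, easily verified for $n \geqs 5$ and $p \geqs 3$. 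For $k \geqs 2$, the inductive step combines the same lower bound on $s_p(n)$ with the inductive hypothesis applied to $n-p$ and $k-1$, reducing the problem to controlling the ratio $s_p(k)/s_p(k-1)$ via the analogous recurrence for $s_p(k)$.

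Part (iii) is easy: $C_L(x) = \langle x \rangle \times S_{n-2}$ gives $|C_L(x)_2| = 2 s_2(n-2)$, and the bound $|C_L(x)_2|/|L_2| \leqs 1/2$ becomes $s_2(n) \geqs 4 s_2(n-2)$, immediate from $s_2(n) \geqs s_2(n-1) + (n-1)s_2(n-2) \geqs n\, s_2(n-2) \geqs 5 s_2(n-2)$ for $n \geqs 5$. Part (ii) is the most involved: $x$ has cycle lengths that are powers of $2$ and is not a single transposition, so $C_L(x) = \prod_{a \geqs 1} (C_{2^a} \wr S_{k_a}) \times S_m$ has a product structure with possibly several factors. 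The additional subtlety is that we must bound separately the proportions of $2$-elements in $J$ and in $Jy$, forcing a parity-refined analysis: split $s_2(r) = s_2^+(r) + s_2^-(r)$ by even/odd parity, count the $2$-elements in $C_L(x)$ lying in $J$ versus $Jy$ using the fact that a $2^a$-cycle is even iff $a = 0$, and establish recurrence-based inequalities for both $s_2^+$ and $s_2^-$ of the same flavor as in parts (i) and (iii).

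The main obstacle is proving the key combinatorial inequality in (i) uniformly in $k$: it is essentially tight in small cases (for instance $n = p$ when $p \geqs 5$, or $n = p+1$ when $p = 3$, both yielding equality), so the induction must be done delicately to avoid losing the factor of $p$. The parity bookkeeping in (ii) introduces further case analysis based on whether $x \in J$ or $x \in Jy$ and on the multiset of cycle lengths of $x$, but the underlying recurrence arguments are of the same nature as in parts (i) and (iii).
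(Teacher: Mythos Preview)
Your approach differs substantially from the paper's. For parts (i) and (ii) the paper does \emph{not} attack the combinatorial inequality $s_p(n)\geqs p^{k+1}s_p(k)s_p(m)$ directly. Instead, after reducing to the case where $x$ has full support, it embeds $C_J(x)$ in an imprimitive maximal subgroup $K$ (for (i), $K=(S_p\wr S_{n/p})\cap J$) and invokes the fixed point ratio theorem of Burness--Guralnick (Theorem~\ref{t:bg}) to obtain $|z^J\cap K|/|z^J|\leqs 1/(p+1)$ for every nontrivial $p$-class $z^J$; together with $|J_p|\geqs p^2$ this gives the $1/p$ bound. The residual small-support cases (a $3$-cycle in (i), support $\leqs 6$ in (ii)) are handled by short ad hoc constructions, and (iii) is essentially your argument. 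So the paper's proof of (i)--(ii) is brief but imports a heavy external result, whereas your route is self-contained and elementary if it can be completed.

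Your plan is reasonable and part (iii) is correct as stated, but two points deserve care. First, in (ii) you write $C_L(x)=\prod_{a\geqs1}(C_{2^a}\wr S_{k_a})\times S_m$; since $x$ has \emph{prime} order $2$ this collapses to a single factor $C_2\wr S_k$ with $k\geqs2$, so the structure is simpler than you suggest (this only helps). Second, and more seriously, the inductive step in (i) is the crux, and you have not shown it closes. Using only the term $(n-1)_{p-1}\,s_p(n-p)$ together with the inductive hypothesis, one needs $(n-1)_{p-1}\geqs p\cdot s_p(k)/s_p(k-1)$; this ratio equals $1$ for $k<p$ but jumps whenever $k$ crosses a multiple of $p$, and the one-term bound alone genuinely fails in borderline cases (e.g.\ $p=3$, $k=1$, $n=3$, excluded here only by $n\geqs5$). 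You will have to feed the $s_p(n-1)$ contribution back in and run a two-term recursion to make the induction uniform. This looks feasible---the numerical slack is large once $n$ grows---but it is more than bookkeeping, and the proposal as written does not yet establish it.
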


\begin{proof}   
First assume $p$ is odd and $|{\rm supp}(x)|= m \geqs 5$ with respect to the natural action of $L$ on $\{1, \ldots, n\}$. Note that $p$ divides $m$ and it suffices to work inside $H:=A_m \times A_{n-m}$ since $H$ contains $\cent Jx_p$. In particular, we may assume that $m=n$. The result is clear if $m=p$, so assume $m > p$. Here $\cent Jx$ is contained in an imprimitive subgroup $K = A_p \wr A_{m/p}$ and so if we write $J_p = x_0^J \cup \cdots \cup x_t^J$ with $x_0 = 1$, then
\[
\frac{|\cent Jx_p|}{|J_p|} \leqs \frac{|K_p|}{|J_p|} =  \frac{1+\sum_{i=1}^ta_i}{1+\sum_{i=1}^tb_i},
\]
where $a_i = |x_i^J \cap K|$ and $b_i = |x_i^J|$. Now Theorem \ref{t:bg} implies that $a_i/b_i \leqs 1/(p+1)$ for all $i$ and we deduce that  
\[
\frac{|\cent Jx_p|}{|J_p|} \leqs \frac{1}{p+1}+\frac{p}{(p+1)c},
\]
where $c = |J_p|$. Since $c \geqs p^2$ (for example, this follows from Lemma \ref{l:classes}) we conclude that this proportion is at most $1/p$, as required.

So to complete the proof of (i), it remains to handle the special case where $x = (1,2,3)$ is a $3$-cycle. Set $d = |(S_{n-3})_3|$ and note that $|\cent Jx_3| = 3d$. If $a = (1,2,i) \in J$ with $i \geqs 4$, then for each $3$-element $b \in J$ fixing $1$, $2$ and $i$ we see that $a^{\pm}b \in J \setminus \cent Jx$ is a $3$-element. Therefore, $|J_3| \geqs 2d(n-3)+3d$ and thus
\[
\frac{|\cent Jx_3|}{|J_3|} \leqs \frac{3}{2n-3} \leqs \frac{1}{3}
\]
for  $n \geqs 6$. The case $n=5$ can be handled directly.

For the remainder, let us assume $p =2$ and write $|{\rm supp}(x)| = 2m$. For 
$m \geqs 4$ we can essentially repeat the argument in (i). Write $\cent Lx = (S_{2}\wr S_m) \times S_{n-2m}$ and let $a_1$ and $a_2$ be the number of even and odd $2$-elements in $S_2 \wr S_m$, respectively. Similarly, let $b_1 = |(A_{n-2m})_2|$ and $b_2 = |(S_{n-2m} \setminus A_{n-2m})_2|$. Then 
\[
|\cent Jx_2| = a_1b_1+a_2b_2,\;\; |\cent Lx_2 \cap Jy| = a_1b_2+a_2b_1.
\]
We claim that 
\begin{equation}\label{e:ineq}
a_1 \leqs \frac{1}{2}|(A_{2m})_2|,\;\; a_2 \leqs \frac{1}{2}|(S_{2m}\setminus A_{2m})_2|.
\end{equation}
To see this, set $K = A_{2m}$ and $H = (S_2 \wr S_m) \cap K$, so $a_1 = |H_2|$. By \cite[Theorem 1]{BG}, we observe that $|z^K \cap H|/|z^K| \leqs 1/3$ for every nontrivial $2$-element $z \in K$ and by arguing as in case (i) we deduce that $|H_2|/|K_2| \leqs 1/2$. This justifies the first inequality in \eqref{e:ineq} and a very similar argument establishes the second. As an immediate consequence, we deduce that 
\[
|\cent Jx_2| \leqs \frac{1}{2}|(S_{2m}\times S_{n-2m})_2 \cap J|,\;\; |\cent Lx_2 \cap Jy| \leqs \frac{1}{2}|(S_{2m}\times S_{n-2m})_2 \cap Jy|
\]
and thus the proportion of $2$-elements in $J$ and $Jy$ commuting with $x$ is at most $1/2$. In the same way, if $m=3$ then we can reduce to the case $n = 6$ and here we can check the result directly.  

Next assume $m=2$, say $x = (1,2)(3,4)$. Set $d =|(S_{n-4})_2|$ and note that 
\[
|\cent Jx_2| = |\cent Lx_2 \cap Jy| = 4d.
\]
Fix  $i, j$ with $4 < i < j$.   
Let $Z(i,j)$ (respectively, $W(i,j)$) be the set of elements in $L$ of the form 
$uv$, where $u$ is a $4$-cycle (respectively, a double transposition different from $(1,2)(i,j)$) on $\{1,2,i,j\}$ and $v$ is a $2$-element fixing each of these $4$ points. Then 
$|Z(i,j)| = 6d$ and $|W(i,j)| = 2d$, so there are at least $2d$ distinct $2$-elements of
each parity in $Z(i,j) \cup W(i,j)$, none of which commute with $x$. Since there are $(n-4)(n-5)/2$ choices for $\{i,j\}$, and the corresponding sets of $2$-elements are pairwise disjoint, this implies that the proportion of $2$-elements in each coset commuting with $x$ is at most 
\[
\frac{4}{4 + (n-4)(n-5)} \leqs \frac{1}{2}
\]
for $n \geqs 7$. The cases $n=5,6$ can be handled directly.  

Finally, let us assume $x = (1,2)$ is a transposition. Set $d =|(S_{n-2})_2|$ and note that $|\cent Lx_2| = 2d$. For each $j \in \{3, \ldots, n\}$, let $Z_j$ denote the set of $2$-elements in $L$ which interchange $1$ and $j$. Note that the $Z_j$ are pairwise disjoint sets of size $d$ and no element in $Z_j$ commutes with $x$. Therefore, $|L_2| \geqs nd$ and we conclude that the proportion of $2$-elements in $L$ centralizing $x$ is at most $2/n$. 
\end{proof}

\begin{rem}\label{r:transpositions}  
One can show that the conclusion in part (ii) of Proposition \ref{p:sym} also holds when $x$ is a transposition. But the proof is more involved and we do not require the stronger result. 
\end{rem} 

\begin{rem}\label{r:covers}
In the proof of Theorem C, we will need to extend Proposition \ref{p:sym} to central extensions $K$ of $A_n$ with $\bO_p(K)=1$. This follows by Lemma \ref{quop'} unless an element of order $p$ does not centralize $\bZ(K)$. This only occurs when $p=2$ and $K$ is a $3$-fold cover of $A_n$. One can check the result directly for these cases. 
\end{rem}

Finally, we are now ready to complete the proof of Theorem C.  
 
\begin{proof}[Proof of Theorem C] 
Let $G$ be a finite group and let $x \in G \setminus{\bO_p(G)}$ be a $p$-element. Let $\bF(G)$ and $\bF^*(G)$ denote the Fitting and generalized Fitting subgroups of $G$, respectively, and note that $x \not\in \bF(G)$. By Lemma \ref{quop}, we may assume that $\bO_p(G)=1$. Without loss of generality, we may assume $o(x) = p$.

If $x$ does not centralize $\bO_{p'}(G)$,  then the result follows by Theorem \ref{thmB1}. Therefore, we may assume $x \in \cent G{\bF(G)}$ and thus $G$ is nonsolvable. Since $x$ is not in $\bO_p(G)$, $x$ acts faithfully on $\bF^*(G)$ and therefore it must act faithfully on some subgroup $K$, which is a central product of quasisimple components (each with order divisible by $p$). We may assume that $K$ is a minimal such subgroup, which implies that $G$ acts transitively on the components of $K$. Note that $\bO_p(K)=1$.

We can further assume that $G=K\cent Gx$ since both $G$ and $K\cent Gx$ contain the same number of $p$-elements commuting with $x$. Therefore, $\cent Gx$ acts transitively on the components of $K$, so either
\begin{itemize}\addtolength{\itemsep}{0.2\baselineskip}
\item[{\rm (a)}] every orbit of $x$ on the components of $K$ has size $p$; or
\item[{\rm (b)}] $x$ normalizes each component of $K$, inducing
the same automorphism (up to conjugacy) on each component.
\end{itemize}

For each $y \in \cent Gx_p$ it suffices to show that the proportion of $p$-elements
in the coset $Ky$ which commute with $x$ is at most $1/p$. Fix such an element $y$ and observe that we may assume that $G=\langle K, x, y \rangle$. In addition, by repeating the argument above, we can reduce to the case where $\langle x, y \rangle$ acts transitively on the components of $K$. We now consider cases (a) and (b) in turn.
 
First assume (a) holds, so $x$ does not normalize any component of $K$. Let $z \in Ky$ be a nontrivial $p$-element. Then by Proposition \ref{p:perm1}, the proportion of elements in $z^K$ commuting with $x$ is at most $1/(p+1)$. Therefore, if $Ky \ne K$ then the proportion of $p$-elements in $Ky$ which commute with $x$ is at most $1/(p+1)$. Similarly, if $Ky = K$ then Lemma \ref{l:classes} implies that $|K_p| \geqs p^2$ and by expressing $K_p$ as a union of $K$-classes we quickly deduce that $|\cent Kx_p|/|K_p| \leqs 1/p$ as required.

Finally, let us assume (b) holds, in which case $y$ must act transitively on the components of $K$. If $K$ has two or more components, then $y$ is nontrivial and we can just interchange $x$ and $y$ in the argument above (noting that any element in $Ky$ still acts transitively on the set of components). This allows us to reduce to the case where $K$ is quasisimple. The result now follows by applying Propositions \ref{p:lie}(ii)(b) and \ref{p:sym}, except for the case where $K/\bZ(K) = A_n$ is an alternating group, $p=2$ and $x$ acts as a transposition on $K$.    In this case, set $L =\langle K, x \rangle \cong S_n$ and note that it suffices to show that the proportion of $2$-elements in the coset $Ly$ which commute with $x$ is at most $1/2$. This follows from Proposition \ref{p:sym}(iii). 
\end{proof} 
  
Theorem D now follows by combining Theorem C with the following result.

\begin{prop} \label{opg}   
Let $G$ be a finite group and let $x \in \bO_p(G) \setminus \bZ(\bO_p(G))$. 
Then 
\[
\frac{|{\cent Gx}_p|}{|G_p|} \leqs \frac{1}{p}.
\]
\end{prop}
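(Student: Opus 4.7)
The plan is to set $P = \bO_p(G)$ and exploit the fact that $x \in P \setminus \bZ(P)$ forces $\cent Px$ to be a proper subgroup of the $p$-group $P$, so that $|\cent Px| \leqs |P|/p$. The strategy is then to bootstrap this bound to all of $G$ via a coset-by-coset counting argument with respect to $P$.

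First I would note that $|G_p| = |P| \cdot |(G/P)_p|$: any $p$-element $\bar y \in G/P$ admits a $p$-element lift $y \in G$, and then the entire coset $Py$ consists of $p$-elements because $P\langle y\rangle$ is a $p$-group. So $p$-elements of $G$ are partitioned into $|(G/P)_p|$ cosets of $P$, each of size $|P|$.

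Next, I would fix a $p$-element $y \in G$ and analyse $|Py \cap \cent Gx|$. A direct computation gives, for $u \in P$,
\[
uy \in \cent Gx \iff u^{-1}xu = yxy^{-1}.
\]
Thus if $u_0, u_1$ both satisfy this relation, then $u_1 u_0^{-1}$ centralises $x$ and lies in $P$, so $u_1 \in \cent Px \cdot u_0$; conversely any element of $\cent Px \cdot u_0$ satisfies the relation. Hence the set in question is either empty or a single left coset of $\cent Px$ in $P$, and in particular
\[
|Py \cap \cent Gx| \leqs |\cent Px| \leqs \frac{|P|}{p}.
\]

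Summing over the $|(G/P)_p|$ cosets of $P$ containing $p$-elements yields
\[
|\cent Gx_p| \leqs \frac{|P|}{p} \cdot |(G/P)_p| = \frac{|G_p|}{p},
\]
which is the desired inequality. The argument is elementary and I do not anticipate any real obstacle; the only step requiring a moment of care is the identification of $\{u \in P : uy \in \cent Gx\}$ as a coset of $\cent Px$, which is a short but essential computation.
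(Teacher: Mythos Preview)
Your proof is correct and follows essentially the same approach as the paper's: both arguments count, coset by coset with respect to $Q=\bO_p(G)$, how many $p$-elements in each coset $Qy$ commute with $x$, using that this number is at most $|\cent Qx|\leqs |Q|/p$. The only cosmetic difference is that the paper first reduces to $G=Q\cent Gx$, which guarantees each relevant coset contains an element of $\cent Gx$ and makes the count exactly $|\cent Qx|$ rather than ``$0$ or $|\cent Qx|$'', but the substance is identical.
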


\begin{proof}  
Set $Q = \bO_p(G)$ and note that we may assume $G = Q\cent Gx$. Let $y \in \cent Gx$ be a $p$-element and note that $(Qy)_p = Qy$. Then the number of elements in the coset $Qy$ commuting with $x$ is equal to $|\cent Qx|$, which is at most $|Q|/p$ since $x \not\in \bZ(Q)$. Therefore, the proportion of $p$-elements in 
$Qy$ commuting with $x$ is at most $1/p$ and the result follows.
\end{proof}  

To close this section, we present a family of examples to show that there exist finite groups $G$ with a $p$-element $x$ such that 
\[
\frac{1}{p}< \frac{|{\cent Gx}_p|}{|G_p|} <1.
\]
Note that Theorem D implies that such an element $x$ must be in $\bZ(\bO_p(G))$. In fact, our examples have the property that this ratio tends to $1$ as $|G|$ tends to infinity. 

We consider the cases $p$ odd and $p=2$ separately.

\begin{ex}\label{e:ex1}
Fix an odd prime $p$ and consider the semidirect product $H = A{:}B$, where $A = (C_p)^3$ is elementary abelian and a generator $b$ for $B = C_p$ acts on $A$ with a single Jordan block. Let $a \in A$ be a generator for $A$ as a module for $B$. Note that $A$ contains a normal subgroup $K$ of $H$ with $|K|=p^2$. Fix an element $x \in K \setminus H$.

Let $r$ be a prime with $r \equiv 1 \imod{p}$ and fix a scalar $\mu \in \mathbb{F}_r^{\times}$ of order $p$. Let $V = (\mathbb{F}_r)^p$ be a $p$-dimensional vector space over $\mathbb{F}_r$ and consider the semidirect product $G = V{:}H$, where $K$ acts trivially on $V$, $a$ acts as $(\mu, \ldots, \mu)$ and $b$ acts via $(1,\mu, \ldots, \mu^{p-1})$. We now compute
\begin{equation}\label{e:exp}
|\cent Gx_p| = (p^3-p^2)r^p +p^2,\;\; |G_p| = (p^3-p^2)r^p + (p^4-p^3)r^{p-1} +p^2.
\end{equation}
This follows by counting the $p$-elements in each coset $Vh$ of $V$, noting that if $h$ centralizes $x$, then the entire coset does as well. Here it is also helpful to observe that  $|(hV)_p| = |h^V|$, where $|h^V| = r^p$ if $h \in A \setminus K$ and $|h^V|=r$ for $h \in H \setminus A$.  

Finally, let us observe that both expressions in \eqref{e:exp} are polynomials in $r$ of degree $p$, with the same leading coefficient, whence $|\cent Gx_p|/|G_p|$ tends to $1$ as $r$ tends to infinity. 
\end{ex}

Similarly, we can present a family of examples for $p=2$.

\begin{ex}\label{e:ex2}
Let $H = \langle a,b \rangle = D_{16}$, where $o(a) = 8$ and $o(b) = 2$. Fix an odd prime $r$ and let $V$ be a $2$-dimensional vector space over $\mathbb{F}_r$. Consider the semidirect product $G = V{:}H$, where $a$ acts as $(-1,-1)$ on $V$ and $b$ acts as $(-1,1)$. Let $x \in H$ be an element of order $4$. Since 
\[
|\cent Gx_2| =  4r^2+4,\;\; |G_2| = 4r^2 + 8r +4,
\]
we conclude that the ratio $|\cent Gx_2|/|G_2|$ tends to $1$ as $r$ tends to infinity.
\end{ex}

\section{Proof of Theorem A}\label{s:thma}
 
In this section we prove Theorem A. We begin with some general observations. As always, $G$ is a finite group and $p$ is a prime. As in Section \ref{s:intro}, we set 
\[
f(p) = \frac{p^2+p-1}{p^3}.
\]
 
\begin{lem}\label{p'}
If $G/N$ is a $p'$-group, then ${\rm Pr}_p(G)= {\rm Pr}_p(\langle G_p \rangle) = {\rm Pr}_p(N)$.
\end{lem}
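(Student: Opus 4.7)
The claim is essentially a bookkeeping statement: both purported equalities come from the observation that the set of $p$-elements of a group $G$ is unchanged when we pass to any subgroup that contains it, and that commutation is intrinsic to the ambient group.

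My plan is as follows. First, I would verify that under the hypothesis $G/N$ is a $p'$-group we have
\[
G_p \subseteq N.
\]
Indeed, if $x \in G$ is a $p$-element, then $Nx$ is a $p$-element of $G/N$; but $G/N$ has trivial Sylow $p$-subgroup, so $Nx = N$, i.e.\ $x \in N$. It follows that $N_p = G_p$. Setting $H = \langle G_p \rangle$, the containment $G_p \subseteq N$ also gives $H \leqs N$, and since $H$ contains every $p$-element of $G$ we have $H_p = G_p$ as well.

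Next, I would simply unwind the definition of ${\rm Pr}_p$. For any group $K$ that sits between $\langle G_p\rangle$ and $G$ and satisfies $K_p = G_p$, the commutation relation $xy=yx$ between two elements $x,y \in K_p$ is the same whether it is tested in $K$ or in $G$. Hence
\[
\frac{|\{(x,y) \in K_p \times K_p : xy=yx\}|}{|K_p|^2}
= \frac{|\{(x,y) \in G_p \times G_p : xy=yx\}|}{|G_p|^2}.
\]
Applying this observation with $K = \langle G_p \rangle$ and with $K = N$ yields the two desired equalities.

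There is no real obstacle here; the only thing to be careful about is that the hypothesis is used solely to force the inclusion $G_p \subseteq N$ (so that $N$ does not contribute ``extra'' $p$-elements), after which the conclusion is immediate from the intrinsic definition of~${\rm Pr}_p$.
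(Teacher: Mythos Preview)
Your argument is correct and matches the paper's approach exactly: the paper's proof is the single line ``This is clear because $G_p = \langle G_p \rangle_p = N_p$,'' and you have simply spelled out why that equality of sets holds and why it immediately gives the conclusion.
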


\begin{proof}
This is clear because $G_p = \langle G_p \rangle_p = N_p$. 
\end{proof}

We need the following elementary generalization of Gustafson's theorem \cite{gus} on the commuting probability ${\rm Pr}(G)$. This result explains the presence of the term $f(p)$ in Theorem A and it extends \cite[Lemma 1.3]{les}.

\begin{lem}\label{gust}
 Let $p$ be the smallest prime divisor of $|G|$. If ${\rm Pr}(G)>f(p)$, then $G$ is abelian.
 \end{lem}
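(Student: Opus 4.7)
The plan is to prove the contrapositive: assume $G$ is nonabelian and deduce $\Pr(G) \leqs f(p)$. This will mirror the classical Gustafson argument ($p=2$) but weighted with $p$ everywhere; the two ingredients are a pointwise bound on centralizers of noncentral elements and an index bound on $\bZ(G)$.

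First I would write the number of commuting pairs as $\sum_{x \in G}|\cent Gx|$ and split the sum according to whether $x \in \bZ(G)$ or not. For $x \in \bZ(G)$ the contribution is exactly $|\bZ(G)||G|$. For $x \notin \bZ(G)$, the conjugacy class $x^G$ has size $|G{:}\cent Gx| \geqs 2$; since this index divides $|G|$ and $p$ is the smallest prime divisor of $|G|$, the index is in fact at least $p$, so $|\cent Gx| \leqs |G|/p$. Combining gives
\[
\Pr(G) \;\leqs\; \frac{|\bZ(G)|}{|G|} + \frac{1}{p}\left(1 - \frac{|\bZ(G)|}{|G|}\right) \;=\; \frac{1}{p} + \left(1-\frac{1}{p}\right)\frac{|\bZ(G)|}{|G|}.
\]

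Next I would control $|\bZ(G)|/|G|$. Since $G$ is nonabelian, $G/\bZ(G)$ is not cyclic. Every prime divisor of $|G/\bZ(G)|$ divides $|G|$, hence is at least $p$. The key observation is that any noncyclic group all of whose prime divisors are $\geqs p$ has order at least $p^2$: groups of prime order are cyclic, and a group of order $pq$ with $p < q$ both at least $p$ has order $pq \geqs p(p+1) > p^2$ anyway, so the minimum is realized by $C_p \times C_p$. Therefore $|G{:}\bZ(G)| \geqs p^2$, i.e., $|\bZ(G)|/|G| \leqs 1/p^2$.

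Plugging in yields
\[
\Pr(G) \;\leqs\; \frac{1}{p} + \left(1-\frac{1}{p}\right)\frac{1}{p^2} \;=\; \frac{p^2+p-1}{p^3} \;=\; f(p),
\]
which is the desired inequality. I do not expect any real obstacle: the argument is essentially Gustafson's, and the only subtlety is justifying $|G{:}\bZ(G)| \geqs p^2$ rather than merely $\geqs 4$. The minimality of $p$ among prime divisors of $|G|$ is used in exactly two places (the centralizer bound and the index bound), and both uses are elementary.
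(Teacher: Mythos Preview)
Your proof is correct and follows essentially the same approach as the paper: both bound the contribution of noncentral elements using that noncentral conjugacy classes have size at least $p$, then use $|G{:}\bZ(G)|\geqs p^2$ from the noncyclicity of $G/\bZ(G)$. The only cosmetic difference is that the paper phrases the first step via $\Pr(G)=k(G)/|G|$ and the class equation, whereas you sum $|\cent Gx|$ directly; the resulting inequality $\Pr(G)\leqs \tfrac{1}{p}+(1-\tfrac{1}{p})\,|\bZ(G)|/|G|$ is identical.
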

 
 \begin{proof}
 Recall that $\PPr(G)=k(G)/|G|$, where $k(G)$ is the number of conjugacy classes in $G$ (see \cite{gus}). Seeking a contradiction, let us assume $G$ is nonabelian. Let $K_1,\ldots, K_r$ be the non-central conjugacy classes of $G$ and note that $|K_i|\geqs p$ for every $i$, so we have  
 \[
 |G|=|\bZ(G)|+\sum_{i=1}^r|K_i|\geqs |\bZ(G)|+rp
 \]
and thus $r\leqs (|G|-|\bZ(G)|)/p$. Therefore, 
 \[
 k(G)=|\bZ(G)|+r\leqs \left(\frac{p-1}{p}\right)|\bZ(G)|+\frac{|G|}{p}
 \]
 and thus
 \[
 \PPr(G)\leqs \frac{(p-1)/p}{|G:\bZ(G)|}+\frac{1}{p}\leqs \frac{(p-1)/p}{p^2}+\frac{1}{p}= f(p),
 \]
 where we have used the fact that $G/\bZ(G)$ is not cyclic in the last inequality. This is a contradiction.
 \end{proof}

We now prove Theorem A.   Note that if $\bF^*(G)$ is a $p'$-group, then the proof does not require the classification of finite simple groups. 

\begin{proof}[Proof of Theorem A]
Let $G$ be a finite group and let $P$ be a Sylow $p$-subgroup of $G$. As previously noted, if $P$ is both normal and abelian, then ${\rm Pr}_p(G)=1$. 

Now assume ${\rm Pr}_p(G)>f(p)$. We need to show that $P$ is a normal abelian subgroup of $G$. To do this, we first use induction on $|G|$ to show that $P$ is normal.

By Lemma \ref{quop}, ${\rm Pr}_p(G/\bO_p(G))>f(p)$. If $\bO_p(G) \ne 1$, then the inductive hypothesis implies that  $G/\bO_p(G)$ has a normal Sylow $p$-subgroup, so  
$\bO_p(G)$ is a Sylow $p$-subgroup of $G$ and we are done. Now assume $\bO_p(G)=1$.  By Theorem C, we have  
\begin{equation}\label{e:eq1}
{\rm Pr}_p(G)=\frac{1}{|G_p|^2}\sum_{x\in G_p}|\bC_G(x)_p| \leqs \frac{1}{|G_p|}\left(1+\frac{|G_p|-1}{p}\right).
\end{equation}
Consider the real-valued function 
\[
\varphi_p(x)=\frac{1}{x}\left(1+\frac{x-1}{p}\right)=\frac{1}{x}\left(1-\frac{1}{p}\right)+\frac{1}{p},
\]
which  is a decreasing function for  $x>0$. Seeking a contradiction, assume that $P$ is not normal. Then $|G_p|\geqs p^2$ (this is clear if $|P| \geqs p^2$, and for $|P|=p$ it follows from the fact that $G$ has at least $p+1$ Sylow $p$-subgroups by Sylow's theorem). Hence, 
\[
\varphi_p(|G_p|)\leqs \varphi_p(p^2)=\frac{1}{p^2}\left(1-\frac{1}{p}\right)+\frac{1}{p}= f(p)
\]
and we conclude that 
\[
{\rm Pr}_p(G)\leqs \varphi_p(|G_p|)\leqs f(p),
\]
a contradiction. Therefore, $P$ is a normal subgroup of $G$.

Finally, Lemma \ref{p'} yields 
\[
\PPr(\bO_p(G))={\rm Pr}_p(\bO_p(G))={\rm Pr}_p(G)> f(p)
\]
and thus Lemma \ref{gust} implies that $\bO_p(G)$ is abelian. 
\end{proof}

\section{Proof of Theorem B}\label{s:thmb}

In this final section we determine the finite groups $G$ with ${\rm Pr}_p(G) = f(p)$, which will allow us to prove Theorem B as a special case. We will need the following auxiliary result.

\begin{lem}\label{cyclicsylow}  
Let $p$ be a prime and let $G$ be a finite group such that
$G=\bO^{p'}(G)$ and $G$ has a Sylow $p$-subgroup of order $p$.
If $\Pr_p(G) = f(p)$, then either
\begin{itemize}\addtolength{\itemsep}{0.2\baselineskip}
\item[{\rm (i)}] $G$ is isomorphic to ${\rm PSL}_2(p)$ or ${\rm SL}_2(p)$; or
\item[{\rm (ii)}] $p = 2^r-1 \geqs 7$ is a Mersenne prime and $G = (C_2)^r{:}C_p$, where $C_p$ acts as a Singer cycle on $(C_2)^r$.
\end{itemize}
\end{lem}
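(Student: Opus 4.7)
\emph{Equality analysis.} I would retrace the proof of Theorem A at the boundary. Since $|P| = p$ and $\Pr_p(G) = f(p) < 1$, $P$ is not normal in $G$, so $\bO_p(G) = 1$, $n_p \geqs p+1$, and $|G_p| \geqs p^2$. The function $\varphi_p(n) := \tfrac{1}{n}(1+\tfrac{n-1}{p})$ is strictly decreasing with $\varphi_p(p^2) = f(p)$, so Theorem C's bound $\Pr_p(G) \leqs \varphi_p(|G_p|)$ forces $|G_p| = p^2$ (equivalently $n_p = p+1$) and $|\bC_G(x)_p| = p$ for every nontrivial $p$-element $x$. Thus $\bC_G(x) \cap G_p = \langle x \rangle$, so distinct Sylow $p$-subgroups share no nontrivially commuting pair of elements.

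\emph{The $p$-solvable case.} If $G$ is $p$-solvable, then $K := \bO_{p'}(G) \ne 1$ and by Schur--Zassenhaus $G = K \rtimes P$. In the semidirect product $N_K(P) = \bC_K(P)$, so $N_G(P) = P\bC_K(P)$, giving $|K : \bC_K(P)| = n_p = p+1$, and $G = \bO^{p'}(G)$ is equivalent (via $P^G = P\cdot [P,K]$) to $[P,K] = K$. The kernel $K_0$ of the conjugation action of $G$ on $\Omega = \operatorname{Syl}_p(G)$ equals $\bC_K(P)$ (using $K_0 \cap K \leqs N_K(P) = \bC_K(P)$ and that a nontrivial $p$-element fixes only its own Sylow), and its normality in $G$ forces $\bC_K(P^k) = \bC_K(P)$ for all $k \in K$; since the $p+1$ Sylows generate $G$, this yields $\bC_K(P) \leqs \bZ(G)$. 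Hence $G/\bZ(G)$ is a Frobenius group of order $p(p+1)$ with complement $C_p$ and kernel $\bar K$ of order $p+1$. By Thompson $\bar K$ is nilpotent; transitivity of the complement on $\bar K \setminus \{1\}$ forces a common prime order $q$ on the nontrivial elements, and the $\bar P$-invariance of $\bZ(\bar K)$ forces $\bar K$ elementary abelian, $\bar K \cong (C_q)^r$ with $q^r = p+1$. Primality of $p = q^r - 1$ gives $(q,r,p) = (3,1,2)$ or $q=2$, $p = 2^r - 1$ Mersenne. Lifting back through $\bZ(G)$ via the Schur multiplier, together with the hypothesis $G = \bO^{p'}(G)$, yields $G \in \{S_3, A_4, \SL_2(3)\}$, or for Mersenne $p \geqs 7$, case (ii).

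\emph{The non-$p$-solvable case.} A standard argument on components of $\bF^*(G)$ (coprime action plus $|G|_p = p$) pins down a unique component $L \nor G$ with $p \mid |L|$ and $S := L/\bZ(L)$ nonabelian simple, $|S|_p = p$. Then $\bar G := G/\bO_{p'}(G)$ is almost simple with socle $S$, $n_p(S) = p+1$, $\bar G = \bO^{p'}(\bar G)$, and the sharp condition $|\bC_S(x)_p| = p$ holds for every nontrivial $p$-element $x$ of $S$. Using CFSG (equivalently, classical results on primitive subgroups of $S_{p+1}$ containing a $p$-cycle, since $\bar G$ acts faithfully on $\Omega$ of degree $p+1$ with nontrivial $p$-elements acting as $p$-cycles), these combine to force $S = \PSL_2(p)$ with $p \geqs 5$: $A_{p+1}$ is eliminated since $n_p(A_{p+1}) = (p-2)!(p+1) \ne p+1$ for $p \geqs 5$, while other Lie-type/sporadic candidates fail the Sylow count or the centralizer equality. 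The condition $\bar G = \bO^{p'}(\bar G)$ rules out proper almost simple extensions such as $\PGL_2(p)$, so $\bar G = \PSL_2(p)$. Since $\PSL_2(p)$ is perfect with Schur multiplier $C_2$ (for $p \geqs 5$) and $G = \bO^{p'}(G)$ forces $G$ to be a perfect central $p'$-extension, we get $G \in \{\PSL_2(p), \SL_2(p)\}$, giving case (i).

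\emph{Main obstacle.} The conceptually hardest step is the CFSG-based identification $S = \PSL_2(p)$, sifting the finite simple groups with $|S|_p = p$ and $n_p(S) = p+1$. The $p$-solvable case is more elementary but hinges on the technical claim $\bC_K(P) \leqs \bZ(G)$, which crucially uses normality of the action kernel $K_0$ together with the generation of $G$ by its Sylow $p$-subgroups.
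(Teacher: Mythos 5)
Your high-level strategy diverges from the paper's: after the equality analysis (which is correct and matches the paper — equality forces $|G_p|=p^2$, $n_p=p+1$ and $\bC_G(x)_p=\langle x\rangle$ for $1 \ne x \in G_p$), the paper does \emph{not} split on $p$-solvability. It takes $K$ to be the largest normal subgroup normalizing every Sylow $p$-subgroup, shows $K$ is a $p'$-group with $[K,P]=1$ and hence central because $G=\bO^{p'}(G)$, observes that $G/K$ is a doubly transitive subgroup of $S_{p+1}$ of order at most $p(p^2-1)$, and reads off the answer from the classification of doubly transitive groups. Your two-branch approach could be made to work, but as written it has two genuine gaps.

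First, your case division is not exhaustive. In the non-$p$-solvable branch you assert that a ``standard argument'' produces a component $L$ with $p \mid |L|$. That is not true in general: a non-$p$-solvable group with $\bO_p(G)=1$ and $|G|_p=p$ may have all of its components of order coprime to $p$, with the nonabelian composition factors of order divisible by $p$ arising only in $G/\bF^*(G)$ — for instance from the permutation action of $G$ on a family of $p'$-components, as in a group of shape $A_5^{\,8}{:}{\rm PSL}_2(7)$ with $p=7$. Such a configuration is covered by neither branch: the group is not $p$-solvable, so the Frobenius argument does not apply, and there is no component of order divisible by $p$, so the component analysis never starts. These groups must be excluded using the equality conditions ($n_p=p+1$, etc.), but your proof never addresses them. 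Second, in the $p$-solvable branch the pivotal identity $K_0=\bC_K(P)$ is only half-proved: you justify $K_0 \leqs \bC_K(P)$, but the inclusion you actually use, $\bC_K(P) \leqs K_0$ (equivalently, that $\bC_K(P)$ normalizes every Sylow $p$-subgroup, i.e.\ is normal in $K$), is precisely the content of the claim $\bC_K(P)\leqs \bZ(G)$ and is asserted without proof, so the argument as written is circular. The inclusion is true — a $p'$-element of $\bC_K(P)$ fixes the point $P$ in the action on the $p+1$ Sylow subgroups and commutes with $\bar{P}$, which acts regularly on the remaining $p$ points, so it acts semiregularly there and, having order prime to $p$, must act trivially, placing it in the kernel — but this step has to be supplied. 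The remaining compressions (the Schur multiplier computation showing the Mersenne extension splits with trivial centre, and the CFSG identification of the socle) are acceptable at the level of a sketch and comparable to the paper's appeal to Cameron's list.
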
 

\begin{proof}  
If $x \in G$ is a nontrivial $p$-element, then $|\cent Gx_p|=p$ and we easily deduce that ${\rm Pr}_p(G)=f(p)$ if and only if $|G_p|=p^2$, or equivalently if $G$ has precisely
$p+1$ Sylow $p$-subgroups. Let $P$ be a Sylow $p$-subgroup and let $K$ be the largest normal subgroup of $G$ normalizing each Sylow $p$-subgroup of $G$. Then $K$ is a $p'$-group and so $[K, P] \leqs K \cap P=1$. Therefore, $K$ is centralized by every $p$-element in $G$, so the condition $G=\bO^{p'}(G)$ implies that $K \leqs \bZ(G)$ and $G/K$ is a doubly transitive subgroup of $S_{p+1}$. Moreover, each point stabilizer in this action is the normalizer of a Sylow $p$-subgroup and thus $|G/K| \leqs p(p^2-1)$.

If $p=2$, then $G/K \cong S_3$ and it is easy to check that $G = S_3 \cong {\rm PSL}_2(2)$ is the only possibility. Similarly, if $p=3$ then $G/K \cong A_4$ and $G=A_4 \cong {\rm PSL}_2(3)$ or ${\rm SL}_2(3) \cong Q_8{:}C_3$ are the only options. For the remainder we may assume that $p \geqs 5$.   

Suppose $p$ is not a Mersenne prime. Then $G/K$ is nonsolvable and by inspecting the list of doubly transitive groups
\cite[Theorem 5.3]{Ca}  we see that $G/K \cong {\rm PSL}_2(p)$. Since ${\rm PSL}_2(p)$ is perfect and $K$ is central in $G$, by considering the Schur multiplier of ${\rm PSL}_2(p)$ we deduce that $G={\rm PSL}_2(p)$ or ${\rm SL}_2(p)$.   

Finally, let us assume $p = 2^r -1 \geqs 7$ is a Mersenne prime, so $r$ is an odd prime.
If $G/K$ is almost simple, we deduce as above that $G ={\rm PSL}_2(p)$ or ${\rm SL}_2(p)$. 
The other possibility is that $G/K$ has a normal elementary abelian $2$-subgroup of order $p+1=2^r$.   Thus $G/K \leqs {\rm AGL}_r(2)$. Here each element in ${\rm AGL}_r(2)$ of order $p$ corresponds to a Singer cycle in ${\rm GL}_r(2)$ and by considering the overgroups of such elements (noting that $G/K$ is generated by $p$-elements and that a point stabilizer 
has order at most $p(p-1)$) we deduce that $G/K = (C_2)^r{:}C_p$. Since $G$ is the normal closure of $P$, it follows that $K$ is a $2$-group. But since $r$ is an odd prime, we deduce that $K=1$ is the only possibility. 
\end{proof}

We can now classify all the finite groups with ${\rm Pr}_p(G) = f(p)$, which yields Theorem B as an immediate corollary. 

\begin{thm}\label{equality} 
Let $p$ be a prime and $G$ a finite group with $G=\bO^{p'}(G)$ and ${\rm Pr}_p(G) = f(p)$. Let $Q=\bO_p(G)$ and let $k$ be a positive integer. Then one of the following holds:
\begin{itemize}\addtolength{\itemsep}{0.2\baselineskip}
\item[{\rm (i)}] $G$ is a $p$-group with $|G:\bZ(G)|=p^2$. 
\item[{\rm (ii)}] $p \geqs 5$, $Q$ is abelian and $G={\rm SL}_2(p) \times Q$ or ${\rm PSL}_2(p) \times Q$.
\item[{\rm (iii)}] $p = 2^r-1 \geqs 3$ is a Mersenne prime and $G=(C_2)^r{:}C_{p^k} \times A$, where $C_{p^k}$ acts as a Singer cycle of order $p$ on $(C_2)^r$ and $A \leqs Q$ is abelian.
\item[{\rm(iv)}] $p=3$ and $G =Q_8{:}C_{3^k} \times A$, where $A \leqs Q$ is abelian. 
\item[{\rm(v)}] $p=2$ and $G = C_3{:}C_{2^k} \times A$, where $A \leqs Q$ is abelian. 
\end{itemize}
\end{thm}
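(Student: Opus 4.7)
The plan is to reduce to the setting of Lemma~\ref{cyclicsylow} by passing to the quotient $G/Q$ with $Q := \bO_p(G)$, and then to classify the central extensions of the possible quotients by $Q$. I first handle the case where a Sylow $p$-subgroup $P$ of $G$ is normal. Here the hypothesis $G = \bO^{p'}(G)$ forces $G = P$ to be a $p$-group, so ${\rm Pr}_p(G) = {\rm Pr}(G) = f(p)$. Retracing the proof of Lemma~\ref{gust} (whose key estimate is ${\rm Pr}(G) \leqs (p-1)/(p|G:\bZ(G)|) + 1/p$), equality with $f(p)$ forces $|G:\bZ(G)| = p^2$ and every non-central conjugacy class of $G$ to have size exactly $p$; this gives case~(i).

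For the remainder assume $P$ is not normal. By Lemma~\ref{quop}(ii), ${\rm Pr}_p(G) \leqs {\rm Pr}_p(G/Q)$; since $\bO_p(G/Q) = 1$ and the Sylow $p$-subgroup of $G/Q$ is nontrivial and hence non-normal, Theorem~A forces ${\rm Pr}_p(G/Q) \leqs f(p)$, so ${\rm Pr}_p(G/Q) = f(p)$. The proof of Lemma~\ref{quop} bounds the number of commuting pairs of $p$-elements in $G$ by $|Q|^2$ times the count in $G/Q$, and equality in this bound forces every $p$-element of $G$ to commute with every element of $Q$; since $G$ is generated by its $p$-elements, $Q \leqs \bZ(G)$, and in particular $Q$ is abelian. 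Now apply the proof of Theorem~A to $\bar G := G/Q$: monotonicity of $\varphi_p(x) = 1/x + (x-1)/(px)$, combined with ${\rm Pr}_p(\bar G) = \varphi_p(p^2)$ and $|\bar G_p| \geqs p^2$ (from the non-normality of the Sylow in $\bar G$), yields $|\bar G_p| = p^2$. Hence the Sylow $p$-subgroup of $\bar G$ has order $p$, and Lemma~\ref{cyclicsylow} identifies $\bar G$ as one of ${\rm PSL}_2(p)$, ${\rm SL}_2(p)$, or $(C_2)^r{:}C_p$ with $p = 2^r - 1$ Mersenne.

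The remaining task is to classify the central extensions $G$ of $\bar G$ by $Q$. If $\bar G \in \{{\rm PSL}_2(p), {\rm SL}_2(p)\}$ and $p \geqs 5$, then $\bar G$ is perfect with Schur multiplier of order coprime to $p$ (trivial for ${\rm SL}_2(p)$, equal to $C_2$ for ${\rm PSL}_2(p)$), so $H^2(\bar G, Q) = 0$ and $G \cong \bar G \times Q$; this gives case~(ii). In every other case $\bar G$ has a characteristic normal $p'$-subgroup $\bar N$ (equal to $(C_2)^r$, $(C_2)^2$, $Q_8$, or $C_3$) with cyclic quotient $\bar G/\bar N \cong C_p$. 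Since $|Q|$ and $|\bar N|$ are coprime, Schur--Zassenhaus applied to the preimage $\tilde N \leqs G$ of $\bar N$ splits off a complement $N \cong \bar N$ to $Q$; by the centrality of $Q$, $N$ is normal in $G$ and $G = N \rtimes P$ for a Sylow $p$-subgroup $P$ containing $Q$. Because $Q \leqs \bZ(P)$ has cyclic quotient, $P$ is abelian, and the structure theorem for finite abelian $p$-groups yields a decomposition $P = \langle x \rangle \times A$ with $A \leqs Q$ and $x$ mapping to a generator of $P/Q$. Then $G = (N \rtimes \langle x \rangle) \times A$, giving cases (iii)--(v). The main obstacle is producing this decomposition $P = \langle x \rangle \times A$ with $A \leqs Q$: one picks $x$ of maximal order among lifts of a generator of $P/Q$, so that $\langle x \rangle$ is a direct summand of $P$, and then modifies an arbitrary complement by subtracting appropriate multiples of $x$ to force it into $Q$.
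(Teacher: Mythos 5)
Your overall route is the same as the paper's: split off the $p$-group case via Lemma \ref{gust}, use equality in Lemma \ref{quop} together with Theorem A to force $Q \leqs \bZ(G)$ and ${\rm Pr}_p(G/Q) = f(p)$, run the $\varphi_p$ estimate from the proof of Theorem A to get a Sylow $p$-subgroup of order $p$ in $G/Q$, invoke Lemma \ref{cyclicsylow}, and then classify the central extensions, using the vanishing of ${\rm Hom}(M(G/Q),Q)$ in the quasisimple cases and Schur--Zassenhaus plus the abelianity of $P$ in the remaining ones. All of this is sound and matches the paper, which is in fact terser at the final step. The one place where your argument breaks as written is the recipe for the decomposition $P = \langle x \rangle \times A$ with $A \leqs Q$.

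You choose $x$ of \emph{maximal} order among the elements of $P \setminus Q$; this is the wrong choice, and the subsequent ``modify a complement by multiples of $x$'' step can fail. Concretely, let $p = 2^r-1$ be a Mersenne prime and $G = ((C_2)^r{:}\langle c \rangle) \times \langle d \rangle$ with $o(c)=p$ and $o(d)=p^2$, so that $Q = \langle d \rangle$ and $P = \langle c \rangle \times \langle d \rangle \cong C_p \times C_{p^2}$. The maximal order in $P \setminus Q$ is $p^2$, attained by $x = cd$, and $\langle cd \rangle$ is indeed a direct summand of $P$; but no subgroup of $Q$ complements it, because the unique subgroup of $Q$ of order $p$ is $\langle d^p \rangle = \langle (cd)^p \rangle \leqs \langle cd \rangle$. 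Accordingly, the complements of $\langle cd\rangle$ are exactly the subgroups $\langle c^i d^{pj} \rangle$ with $p \nmid i$, and your adjustment $c^i d^{pj}(cd)^{-k} \in \langle d \rangle$ forces $k \equiv i \not\equiv 0 \pmod{p}$, which produces an element of order $p^2$ generating all of $\langle d\rangle$ and hence not a complement. The correct choice is the opposite one: fix a cyclic decomposition $P = \prod_i \langle g_i \rangle$ and take $g_{i_0}$ of \emph{minimal} order among the generators $g_i \notin Q$. Each other generator $g_j \notin Q$ can then be replaced by $g_j g_{i_0}^{-k_j} \in Q$ for a suitable $k_j$ coprime to $p$; since $o(g_{i_0})$ divides $o(g_j)$ this substitution defines an automorphism of $P$, yielding $P = \langle g_{i_0} \rangle \times A$ with $A \leqs Q$ (in the example above this selects $x = c$, of order $p$, with $A = Q$). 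With this fix --- the decomposition itself is true, and is asserted without proof in the paper --- your argument goes through.
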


\begin{proof} 
Set $Q = \bO_p(G)$. If $G/Q$ is abelian, then $G$ is a $p$-group and by arguing as in the proof of Lemma \ref{gust}, we see that ${\rm Pr}_p(G) = f(p)$ if and only if $|G:\bZ(G)|=p^2$.
 
For the remainder, we may assume $G/Q$ is nonabelian and thus ${\rm Pr}_p(G) = {\rm Pr}_p(G/Q)$ by Lemma \ref{quop}. This implies that if $x,y \in G_p$ commute modulo $Q$, then $[x,y]=1$, which in turn implies that $Q \leqs \bZ(G)$.

First assume that $Q=1$. Let $P$ be a Sylow $p$-subgroup of $G$ and let $n_p$ be the number of Sylow $p$-subgroups of $G$.  As noted in the proof of Theorem A (see \eqref{e:eq1}), we have
\[
{\rm Pr}_p(G) \leqs \frac{1}{|G_p|}\left(1+\frac{|G_p|-1}{p}\right).
\]
If $|G_p| > p^2$ then 
\[
{\rm Pr}_p(G) \leqs \frac{p+1}{p^2+1} < \frac{p^2+p-1}{p^3}
\]
so we may assume $|G_p| \leqs p^2$ and thus $P$ is abelian. If $|P|=p^2$, then $P$ is normal and abelian and so ${\rm Pr}_p(G) =1$, a contradiction. So we can reduce further to the case where $|P|=p$ and $n_p = p+1$. Now apply Lemma \ref{cyclicsylow} to conclude.

Finally, let us assume $Q \ne 1$ and note that $G/Q$ is one of the groups described in Lemma \ref{cyclicsylow}. First assume $G/Q$ is nonsolvable, so $p \geqs 5$ and $G$ is a central extension of $\PSL_2(p)$ or $\SL_2(p)$ by $Q$. Here (ii) holds since every $p$-central extension of one of these groups is split. 

Suppose that $p$ is a Mersenne prime with $p \geqslant 7$.    Let $T$ be a Sylow $2$-subgroup of $G$.
Then $T$ is elementary abelian and $K:=TQ = T \times Q$.  Thus, $T$ is normal in $G$ and
$G=TP$ with $P$ a Sylow $p$-subgroup of $G$.  Since $P/K$ has order $p$,  $P$ is abelian and so
$P=\langle x \rangle \times A$ where $A$ is central and $x$ induces an automorphism of order $p$ on $T$,
whence (iii) holds.   If $p=3$,  the same argument applies except that $T$ is either elementary abelian of order
$4$ or a quaternion group of order $8$, leading to (iv).   

If $p=2$, the same argument applies with $T \cong C_3$ a Sylow $3$-subgroup of $G$.   This leads to (v). 
\end{proof}  

\begin{cor} \label{ns}  
Let $G$ be a finite group such that $\Pr_p(G) = f(p)$.
\begin{itemize}\addtolength{\itemsep}{0.2\baselineskip}
\item[{\rm (i)}] If $p=2$, then $G$ is solvable and $\bO^{2'}(G)$ is metabelian.
\item[{\rm (ii)}] If $p=3$, then $\bO^{3'}(G)$ is solvable. 
\end{itemize}
\end{cor}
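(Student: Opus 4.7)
The plan is to reduce both statements to the classification in Theorem~\ref{equality} and then carry out a routine case inspection. Set $H=\bO^{p'}(G)$. Since $G/H$ is a $p'$-group, Lemma~\ref{p'} gives $\Pr_p(G)=\Pr_p(H)=f(p)$. The set of $p$-elements of $H$ coincides with $G_p$, so $H=\bO^{p'}(H)$, and hence Theorem~\ref{equality} applies to $H$ and describes it precisely.

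For part (i), take $p=2$. Going through the five cases of Theorem~\ref{equality}, case (ii) requires $p\geqs 5$, case (iii) requires $p$ to be an odd Mersenne prime, and case (iv) requires $p=3$, so only cases (i) and (v) can occur. In case (i), $H$ is a $2$-group with $|H:\bZ(H)|=4$; since $H/\bZ(H)$ cannot be cyclic it must be elementary abelian of order $4$, so $H'\leqs \bZ(H)$ and hence $H''=1$. In case (v), $H=C_3{:}C_{2^k}\times A$ with $A$ abelian, and the action of $C_{2^k}$ on $C_3$ factors through ${\rm Aut}(C_3)=C_2$, so $H'\leqs C_3$ is abelian and again $H''=1$. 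Either way $H$ is metabelian. Finally, $G/H$ has odd order, hence is solvable by the Feit--Thompson theorem; together with $H$ solvable this yields that $G$ is solvable.

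For part (ii), take $p=3$. The possibilities from Theorem~\ref{equality} reduce to case (i) ($H$ a $3$-group), case (iii) with $r=2$ (so $H=(C_2)^2{:}C_{3^k}\times A$ with $A$ abelian), and case (iv) ($H=Q_8{:}C_{3^k}\times A$ with $A$ abelian). In each case $H$ admits a normal series with abelian factors, so $\bO^{3'}(G)=H$ is solvable.

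The whole argument is a bookkeeping check against the classification already established in Theorem~\ref{equality}, so there is no deep obstacle. The one subtlety worth flagging is the appeal to the Feit--Thompson theorem in part (i) to upgrade solvability of $\bO^{2'}(G)$ to solvability of $G$; this is consistent with the rest of the paper, which invokes the classification of finite simple groups elsewhere.
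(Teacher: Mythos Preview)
Your proof is correct and matches the paper's approach: the paper states this corollary without proof, treating it as an immediate consequence of Theorem~\ref{equality}, and you have simply written out the case inspection that the authors left implicit. The invocation of Feit--Thompson to pass from solvability of $\bO^{2'}(G)$ to solvability of $G$ is appropriate and, as you note, well within the ambient assumptions of the paper.
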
 

Finally, we turn to the asymptotic behaviour of ${\rm Pr}_p(G)$ with respect to a fixed prime $p$ and a sequence of simple groups of order divisible by $p$. Set 
\[
f_p(G) = \max\{ f_p(x) \,:\, 1 \ne x \in G_p \},
\]
where $f_p(x) = |\cent Gx_p|/|G_p|$. Note that
\[
{\rm Pr}_p(G) = \frac{1}{|G_p|}\sum_{x \in G_p} f_p(x) \leqs \frac{1}{|G_p|}+\left(1-\frac{1}{|G_p|}\right)f_p(G).
\]
 
\begin{prop}\label{p:aa}
Fix a prime $p$ and let $G = A_n$ be the alternating group of degree $n$. Then ${\rm Pr}_p(G) \to 0$ as $n \to \infty$.
\end{prop}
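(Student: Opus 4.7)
The plan is to combine the bound
\[
{\rm Pr}_p(G) \leqs \frac{1}{|G_p|} + f_p(G),
\]
stated just before the proposition, with explicit size estimates. The first summand vanishes for $G = A_n$ because $|(A_n)_p|$ grows without bound (for $p$ odd it already contains all $p$-cycles, and for $p=2$ all double transpositions). It therefore suffices to show that
\[
f_p(A_n) = \max_{1 \ne x \in (A_n)_p} \frac{|\bC_{A_n}(x)_p|}{|(A_n)_p|} \to 0.
\]

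Fix $1 \ne x \in (A_n)_p$ with support of size $m$ and cycle type consisting of $b_i$ cycles of length $p^i$. From $\bC_{S_n}(x) = \bC_{S_m}(x) \times S_{n-m}$ with $\bC_{S_m}(x) \cong \prod_i C_{p^i} \wr S_{b_i}$, together with the observation that an element $(v,\sigma) \in C_{p^i} \wr S_{b_i}$ is a $p$-element if and only if $\sigma \in (S_{b_i})_p$, one obtains
\[
|\bC_{A_n}(x)_p| \leqs |\bC_{S_n}(x)_p| = p^{\sum_i ib_i}\prod_i|(S_{b_i})_p|\cdot|(S_{n-m})_p|.
\]
A matching lower bound on $|(A_n)_p|$ comes from the fact that every $S_n$-conjugate of $x$ has sign $+1$ and hence lies in $A_n$, so $|(A_n)_p| \geqs |x^{S_n}| = n!/|\bC_{S_n}(x)|$. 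Dividing gives
\[
f_p(x) \leqs \frac{|\bC_{S_m}(x)_p|\cdot|\bC_{S_m}(x)|\cdot(n-m)!\cdot|(S_{n-m})_p|}{n!},
\]
which reduces the proposition to a purely combinatorial estimate.

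The main task, and the principal obstacle, is to show this last ratio tends to $0$ uniformly in $m$ and in the cycle type of $x$. When $m$ is bounded (or more generally $m = o(n)$), the factors involving $\bC_{S_m}(x)$ are controlled by elementary inequalities and the decay comes from $(n-m)!/n! \leqs n^{-m}$, combined with the recurrence $|(S_{n+1})_p| \geqs n(n-1)\cdots(n-p+2)\cdot|(S_{n-p+1})_p|$ derived from the exponential generating function identity
\[
\sum_{n\geqs 0} \frac{|(S_n)_p|}{n!}\, x^n = \exp\Bigl(\sum_{k\geqs 0} \frac{x^{p^k}}{p^k}\Bigr).
\]
When $m$ is close to $n$, the extremal case is $x$ a product of $n/p$ disjoint $p$-cycles, for which $|\bC_{S_n}(x)_p| = p^{n/p}|(S_{n/p})_p|$ and a direct Stirling computation gives $(p^{n/p}(n/p)!)^2/n! = o(1)$ exponentially for $p \geqs 3$. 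For $p = 2$ this Stirling ratio is only $O(\sqrt n)$, so one must invoke the sharper Moser--Wyman asymptotic
\[
|(S_n)_2| \sim \frac{n^{n/2}}{\sqrt 2\, e^{n/2 - \sqrt n - 1/4}}
\]
for the involution count; its $e^{\sqrt n}$ correction is precisely what closes the gap. Intermediate values of $m$ are handled by interpolating between the small-support and large-support estimates.
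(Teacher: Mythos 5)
Your opening reduction (to showing $f_p(A_n)\to 0$) matches the paper's, and for odd $p$ your two tools do eventually cover all support sizes with overlapping ranges of validity. But for $p=2$ there is a genuine hole in the regime where the support of $x$ is a constant fraction of $n$, and it cannot be patched by ``interpolating''. Concretely, take $x$ a product of $k=\lfloor 0.45n\rfloor$ transpositions, so $m=2k\approx 0.9n$. Since the number of $2$-elements of $S_j$ is $j!\,e^{-O(\log^2 j)}$ (consider permutations whose cycle type is the binary expansion of $j$), we get $|\bC_{S_n}(x)_2|=2^k|(S_k)_2||(S_{n-2k})_2|=e^{0.55\,n\log n+O(n)}$, whereas the Moser--Wyman involution count is $I(n)=e^{0.5\,n\log n+O(n)}$. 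Thus the \emph{true} numerator already exceeds your proposed denominator by a factor $e^{0.05\,n\log n}$, so no argument that lower-bounds $|(A_n)_2|$ by (half) the number of involutions can possibly work for this $x$. The class-size bound fails too ($|x^{S_n}|=e^{0.45\,n\log n+O(n)}$ here), and your recurrence, iterated down, only yields $|(S_n)_2|\geqs (n-1)!!$, again of order $e^{0.5\,n\log n}$; used relatively it gives a bound of the shape $((m/e)/(n-m))^{m/2}$, which is useful only for $m\lesssim 0.73n$. Both of your endpoint estimates therefore fail on a whole interval of support sizes, which is exactly where the interpolation would have to do the work. Your asserted extremal case for large support (the fixed-point-free element) is also not extremal for your bounds: $m\approx 0.9n$ is strictly worse than $m=n$.

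The paper avoids all of this by never invoking absolute asymptotics for $|(S_n)_p|$. It first reduces to a single cycle type, asserting that for large $n$ the quantity $|\bC_{S_n}(y)_p|$ is maximized over nontrivial $p$-elements $y$ when $y$ is a $p$-cycle, and then, for that one element, exhibits $\Theta(n)$ pairwise disjoint sets of $p$-elements outside the centralizer, each of size comparable to $|\bC_{A_n}(x)_p|$ itself; this gives a purely relative bound of order $1/n$ with no analytic input, and $p=2$ needs no special treatment. If you want to keep your cycle-type-by-cycle-type strategy, the missing ingredient for $p=2$ is a denominator lower bound of the shape $|(S_n)_2|\geqs n!\,e^{-O(\log^2 n)}$ together with a matching upper bound on the factor $|(S_{m/2})_2|$ occurring in the numerator; the involution asymptotic is too weak by a factor that is superexponential in $n$.
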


\begin{proof}
Since $|G_p|$ tends to infinity with $n$, it suffices to show that $f_p(G)$ tends to $0$. Let $y \in S_n$ be a nontrivial $p$-element. It is a straightforward exercise to check that for $n$ large enough, $|\cent {S_n}y_p|$ is maximal when $y$ is a $p$-cycle. Let us also observe that $S_n$ contains an equal number of even and odd $2$-elements commuting with a given $2$-element $z \in S_n$ (this is because $\bO_2(\cent {S_n}z)$ contains odd permutations when $z$ is nontrivial). Therefore, if $n$ is large enough we have  
\[
f_p(G) \leqs \frac{|\cent Gx_p|}{|G_p|}
\]
with $x = (1,\ldots, p) \in S_n$ a $p$-cycle. For each integer $p<j \leqs n$, let $y_j \in S_n$ be a $p$-cycle   with an orbit $\{1, \ldots, p-1,j\}$ and let $Z_j$ be the set of $p$-elements in $\cent {S_n}{y_j}$ that act nontrivially on $\{1, \ldots, p-1,j\}$.    Note that the $Z_j$ are pairwise disjoint.

If $p$ is odd, then $|Z_j|= (p-1)!|(A_{n-p})_p|$ and we have $|\cent Gx_p| =  p|Z_j|/(p-1)! \geqs 2|Z_j|/3$, whence 
 \[
\frac{|\cent Gx_p|}{|G_p|} \leqs \frac{2}{3(n-p)} 
\]
and this upper bound tends to $0$ as $n$ tends to infinity. Similarly, if $p=2$ then 
\[
|Z_j|= |(S_{n-2})_2| =|\cent Gx_2|
\]
and the result follows. 
\end{proof}

It is possible to establish an analogous result for simple groups of Lie type, but the details are more complicated and they will be given elsewhere. Here we just sketch some of the main ideas.  Fix a prime $p$. Let $G$ be a simple group of Lie type over $\mathbb{F}_q$ of (untwisted) rank $r$ and assume $p$ divides $|G|$.  As before, it suffices to show that $f_p(G) \to 0$ 
as $|G| \rightarrow \infty$.

First suppose that $q$ is increasing. Let $x \in G$ be a nontrivial $p$-element such that $f_p(x) = f_p(G)$ and note that we may assume $x$ has order $p$. Let $y \in G$ be a nontrivial $p$-element and observe that 
\[
\frac{|y^G \cap \cent Gx|}{|y^G|}
\]
is the probability that $x$ commutes with a random conjugate of $y$. By the main theorem of \cite{LS}, this ratio goes to $0$ as $q$ tends to infinity. Since this is true for every nontrivial conjugacy of $p$-elements, and since the number of $p$-elements in $G$ tends to infinity as $q$ increases (recall that we are assuming $p$ divides $|G|$), we conclude that $f_p(G) \rightarrow 0$.   

Now suppose $q$ is fixed and $r$ is increasing, so we may assume $G$ is a classical group and we note that $p$ divides $|G|$ if $r \geqs p$. First assume $p$ divides $q$, so  we are considering unipotent elements. By a result of Steinberg (see \cite[Lemma 2.16]{LSbook}, for example) we have $|G_p| = q^{\dim X - r}$, where $X$ is the ambient simple algebraic group. By inspecting \cite{LSbook}, it is easy to see that $|\cent Gx_p|$ is maximal when $x$ is a long root element and the result follows easily. 

Finally, let us assume that $p$ does not divide $q$ and so $x$ is a semisimple element. This situation is somewhat more complicated, but there are several ways to proceed and much stronger results can be established. For example, \cite[Theorem 16]{BGG} implies that if $p$ is odd and $r>2$ then the probability that two random elements of order $p$ generate $G$ tends to $1$ as $|G|$ tends to infinity (in particular, the probability that two such elements commute tends to $0$). With some additional work, this can be extended to $p$-elements, including the case $p=2$ (of course, a pair of involutions will not generate $G$, but the probability that they commute still goes to $0$ as $r$ increases). This stronger result implies that ${\rm Pr}_p(G) \to 0$ as $r$ tends to infinity.

It is interesting to consider some extensions of this problem. For example, suppose $G$ is a finite group such that $\bO_p(G) = 1$ and $G = \bO^{p'}(G)$. Do we have ${\rm Pr}_p(G) \to 0$ as $|G| \to \infty$?

\end{document}